\newtheorem{thm}{Theorem}[section]
\newtheorem{lem}[thm]{Lemma}
\newtheorem{prop}[thm]{Proposition}
\theoremstyle{definition}
\theoremstyle{remark}
\newtheorem{rem}[thm]{Remark}
\theoremstyle{conjecture}
\newtheorem{conj}[thm]{Conjecture}
\begin{document}

\title[Small degeneracy set]{Minimizers of convex functionals with small degeneracy set}
\author{Connor Mooney}
\address{Department of Mathematics, UC Irvine}
\email{\tt mooneycr@math.uci.edu}

\begin{abstract}
We study the question whether Lipschitz minimizers of $\int F(\nabla u)\,dx$ in $\mathbb{R}^n$ are $C^1$ when $F$ is strictly convex. Building on work of De Silva-Savin, we confirm the $C^1$ regularity
when $D^2F$ is positive and bounded away from finitely many points that lie in a $2$-plane. We then construct a counterexample in $\mathbb{R}^4$, where $F$ is strictly convex but $D^2F$ degenerates 
on the intersection of a Simons cone with $S^3$. Finally we highlight a connection between the case $n = 3$ and a result of Alexandrov in classical differential geometry, and we make a conjecture about this case.
\end{abstract}
\maketitle

\section{Introduction}
In this paper we study the regularity of Lipschitz minimizers of
\begin{equation}\label{Functional}
E(u) = \int_{B_1} F(\nabla u)\,dx
\end{equation}
in $\mathbb{R}^n$, where $F: \mathbb{R}^n \rightarrow \mathbb{R}$ is convex. By Lipschitz minimizer we mean a function $u \in W^{1,\,\infty}(B_1)$ that satisfies
$E(u + \varphi) \geq E(u)$ for all $\varphi \in C^1_0(B_1)$. It is straightforward to show that Lipschitz minimizers solve the Euler-Lagrange equation
\begin{equation}\label{EL}
\text{div}(\nabla F(\nabla u)) = 0
\end{equation}
in the weak sense. Conversely, any Lipschitz weak solution of (\ref{EL}) is a minimizer of $E$ by the convexity of $F$.

In the extreme case that the graph of $F$ contains a line segment, minimizers are no better than Lipschitz by simple examples. 
In the other extreme that $F$ is smooth and uniformly convex, De Giorgi and Nash proved that Lipschitz minimizers are smooth and solve the Euler-Lagrange equation $F_{ij}(\nabla u)u_{ij} = 0$
classically (\cite{DG}, \cite{Na}). It remains largely open what happens in the intermediate case where $F$ is strictly convex, but the eigenvalues of $D^2F$ go to $0$ or $\infty$ on some set $D_F$. Such functionals arise naturally in the study of anisotropic surface tensions \cite{DMMN}, traffic flow \cite{CF}, and statistical mechanics (\cite{CKP}, \cite{KOS}).

In \cite{DS} the authors raise the natural question: 
\begin{equation}\label{Question}
\text{\it Are Lipschitz minimizers in $C^1$ when $F$ is strictly convex?}
\end{equation}
They give evidence that the answer may be ``yes," at least in two dimensions. In particular, they show that if $n = 2$ and $D_F$ consists of finitely many points,
then Lipschitz minimizers of $E$ are $C^1$. In this paper we study this question in higher dimensions. We first confirm
the $C^1$ regularity of Lipschitz minimizers when $D_F$ is a finite set in some $2$-plane. In particular, this covers the case that $D_F$ consists of three points. 
We then show the answer to Question (\ref{Question}) is ``no" in general, by constructing a singular Lipschitz minimizer in $\mathbb{R}^4$. In our example, $F$ is in fact uniformly convex and $C^1$, but one eigenvalue of $D^2F$ goes to $\infty$ on the intersection of a Simons cone with $S^3$. This leaves open the possibility that Lipschitz minimizers are $C^1$ in dimension $n \geq 3$ in the interesting case that $D_F$ consists of finitely many points. To address this problem we connect it to a result of Alexandrov in the classical differential geometry of convex surfaces, and we propose a possible counterexample
in $\mathbb{R}^3$ where $D_F$ consists of four non-coplanar points.

\begin{rem}
Guided by the observation that the Legendre transform $F^*$ of $F$ solves $\text{div}(\nabla F(\nabla F^*)) = \text{div}(x) = n$, one could (more ambitiously) ask whether the minimizers
are as regular as $F^*$. This is known in some special cases, e.g. for the $p$-Laplace case $F(x) = |x|^p$ when $p > 2$ and $n = 2$ (see \cite{ATO},\, \cite{IM}).
\end{rem}

\begin{rem}
The case that $D_F$ consists of a single point (e.g. $p$-Laplace) is well-studied (see \cite{E}, \cite{Uh}, \cite{Ur}). The case that $D_F$ is ``large" is also understood: in \cite{CF} the authors show 
that if $D_F$ is convex and $F = 0$ on $D_F$, then for $x \in B_1$ the gradients $\nabla u(B_r(x))$ localize as $r \rightarrow 0$ either to a point outside $D_F$ or to $D_F$.
\end{rem}

\begin{rem}
One can show the existence of Lipschitz minimizers with additional hypotheses on the behavior of $F$ at infinity. For example, if $F$ has quadratic growth,
then for $g \in H^1(B_1)$ the direct method gives the existence of a minimizer $u \in H^1(B_1)$ with $u - g \in H^1_0(B_1)$. If $g$ is smooth enough
($C^{1,\,1}$ suffices) then $u$ is Lipschitz by the comparison principle. Alternatively, if $F$ is uniformly convex with bounded second derivatives at infinity, then $u$ is locally Lipschitz. For a proof of this result, see \cite{Ma2}. 
The local Lipschitz regularity of minimizers is in fact true under assumptions that allow for growth of $D^2F$ at infinity (the so called $(p,\,q)$ growth conditions); see \cite{Ma1}, \cite{Ma2}, and the references therein.
\end{rem}

The paper is organized as follows. In Section \ref{Statements} we give precise statements of our results, and we discuss a connection between the problem in dimension $n = 3$ and a result
of Alexandrov. In Section \ref{C1Proof} we prove the $C^1$ regularity result. In Section \ref{CounterexProof} we construct the counterexample. Finally, in the Appendix
we record some technical results that we used to construct the counterexample.

\section{Statements of Results}\label{Statements}
Let $F : \mathbb{R}^n \rightarrow \mathbb{R}$ be a $C^1$ convex function, and let $D_F \subset \mathbb{R}^n$ be a compact set such that
$$F \in C^2(\mathbb{R}^n \backslash D_F), \quad D_F = \mathbb{R}^n \backslash \left(\cup_{k > 1} \{k^{-1}I < D^2F < kI\}\right).$$
Here and below, dependence on $F$ means dependence on the sets 
$$\mathcal{O}_k := \{k^{-1}I < D^2F < kI\} \subset \mathbb{R}^n \backslash D_F$$ 
(in particular, the geometry of $D_F$), and on the moduli continuity of $D^2F$ in compact sets that exhaust $\mathbb{R}^n \backslash D_F$. Our first theorem is:
\begin{thm}\label{C1}
Let $u$ be a Lipschitz solution of (\ref{EL}). If $D_F$ is finite and is contained in a two-dimensional affine subspace of $\mathbb{R}^n$, then $u \in C^1(B_1)$,
and the modulus of continuity of $\nabla u$ in $B_{1/2}$ depends only on on $n,\,F,$ and $\|\nabla u\|_{L^{\infty}(B_1)}$.
\end{thm}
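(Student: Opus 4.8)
The plan is a blow-up (compactness) argument whose content is that every blow-up of $u$ is affine; the $C^1$ regularity, and then the quantitative modulus of $\nabla u$ in $B_{1/2}$, follow from this by a routine compactness-and-contradiction iteration. Two standard facts are used throughout. \emph{Interior estimates off $D_F$:} if $\operatorname{dist}(\nabla u,D_F)\ge\delta$ a.e.\ on a ball $B_r(x_0)\subset B_1$, then $\nabla u$ takes values in the compact set $\{|y|\le L\}\cap\{\operatorname{dist}(y,D_F)\ge\delta\}\subset\mathbb R^n\setminus D_F$, where $L:=\|\nabla u\|_{L^\infty(B_1)}$; since the open sets $\mathcal O_k$ increase to $\mathbb R^n\setminus D_F$, this set lies in a single $\mathcal O_k$, so on $B_r(x_0)$ equation (\ref{EL}) is uniformly elliptic with continuous coefficients $F_{ij}(\nabla u)$, and De Giorgi--Nash--Moser, Schauder, and a bootstrap give $u\in C^\infty(B_{r/2}(x_0))$ with estimates depending only on $n,F,\delta,r,L$. \emph{Liouville:} a global Lipschitz weak solution of (\ref{EL}) whose gradient stays a fixed distance from $D_F$ is affine — it is smooth and globally uniformly elliptic by the above, and each directional derivative solves a global uniformly elliptic linear equation and is bounded, hence constant.

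Suppose then that $\nabla u$ has no limit at some $x_0\in B_{3/4}$, so $\omega_0:=\limsup_{r\to0}\operatorname{osc}_{B_r(x_0)}\nabla u>0$. The rescalings $u_r(x):=r^{-1}(u(x_0+rx)-u(x_0))$ are $L$-Lipschitz, so along a sequence $r_j\to0$ realizing $\omega_0$ they converge locally uniformly to some $u_\infty$; a standard argument using monotonicity of $\nabla F$ shows $u_\infty$ is again a global Lipschitz weak solution of (\ref{EL}), and since $\nabla u_\infty$ has positive essential oscillation, $u_\infty$ is \emph{not affine}. By the Liouville statement, $\overline{\nabla u_\infty(\mathbb R^n)}$ must then meet $D_F$.

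The final step is to reach a contradiction by reducing to the two-dimensional theorem of \cite{DS} for the finite planar set $D_F$. After subtracting a linear function we may assume $D_F$ lies in the two-plane $P=\{y_3=\dots=y_n=0\}$ through the origin. Let $w:=\sum_{k=3}^{n}(\partial_k u_\infty)^2=|\pi_{P^\perp}\nabla u_\infty|^2$; then $w=0$ exactly where $\nabla u_\infty\in P$, in particular wherever $\nabla u_\infty\in D_F$. On the open set $\{w>0\}$ the gradient is bounded away from $D_F$, so $u_\infty$ is smooth there, each component $\partial_k u_\infty$ solves the linearized equation $\partial_i(F_{ij}(\nabla u_\infty)\,\partial_j\partial_k u_\infty)=0$, and consequently $w$ is a subsolution, $\partial_i(F_{ij}(\nabla u_\infty)\,\partial_j w)\ge0$, of a linear operator that is uniformly elliptic away from $\{w=0\}$. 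Using the interior estimates of the first step (so that $u_\infty$ is smooth, with quantitative control, on each set $\{w\ge\eta\}$), a maximum-principle argument should force $w$ to be constant: were its essential supremum $M$ positive, it would be attained — directly, or after translating to a sequence of near-maximum points and passing to a limit — at a point where the strong maximum principle applies in a uniformly elliptic neighborhood, so that $w\equiv M$ by a connectedness argument. If $M>0$ then $\nabla u_\infty$ avoids $D_F$ entirely and the Liouville statement forces $u_\infty$ affine, a contradiction; if $M=0$ then $\nabla u_\infty\in P$ everywhere, so $u_\infty=u_\infty(x_1,x_2)$ is a non-affine global Lipschitz solution of the two-dimensional Euler--Lagrange equation for $F|_P$ with degeneracy set the finite set $D_F\subset\mathbb R^2$, contradicting \cite{DS}.

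I expect the main obstacle to be the last step: justifying the maximum principle for $w$ rigorously when $\nabla u_\infty$ is merely Lipschitz and the linearized operator degenerates precisely on $\{w=0\}$, which may even carry positive measure if $u_\infty$ develops facets with $\nabla u_\infty\in D_F$. This will likely require working directly with the divergence-free field $\nabla F(\nabla u_\infty)$ — say, a Caccioppoli/energy comparison on the regions $\{w\ge\eta\}$ together with a removable-singularity step across $\{w=0\}$ — combined with the quantitative interior estimates to propagate the value of $w$ from where it is bounded below.
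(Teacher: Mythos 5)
Your proposal follows a blow-up/Liouville strategy that is genuinely different from the paper's direct quantitative localization. The paper never forms a blow-up limit; instead it works with a smooth approximation of $u,F$ and runs an iterative ``chopping'' algorithm on the sets $\nabla u(B_{2^{-k}})$, using the weak Harnack inequality (Proposition \ref{WeakHarnack}) applied to specially chosen functions of $\nabla u$: first convex ones (to localize $\nabla u(B_r)$ to a neighborhood of the convex hull $K_F$, Proposition \ref{ConvexLocalization}), and then \emph{non-convex} ones of the form $\eta_{A,p_0}=e^{A^2|q|^2/2-A|p-p_0|}-e^{-A\epsilon_1/2}$ (to chop the two-dimensional projection with circles, Lemma \ref{Chopping2}, Proposition \ref{Connected}). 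Lemma \ref{NonConvex}, showing that functions of $\nabla u$ that are slightly concave in one direction are still subsolutions of the linearized operator, is the technical heart; it is what lets the argument penetrate past the convex hull of $D_F$. Your proposal uses only the \emph{convex} subsolution $w=|\pi_{P^\perp}\nabla u_\infty|^2$ and hopes to reduce to $\cite{DS}$ in two dimensions, so the non-convex chopping idea does not appear.

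The difficulty you flag at the end is real, and I think it is fatal as written. The maximum-principle step in the case $M>0$ has three interlocking problems. First, the supremum of $w$ need not be attained; translating to near-supremum points $x_j$ and passing to a limit yields a \emph{different} global solution $\tilde u$, and even if one shows $\tilde w\equiv M$ and hence $\tilde u$ affine, that says only that $u_\infty$ is approximately affine near $x_j$ for large $j$, which does not contradict $u_\infty$ being non-affine. (A bounded subsolution on $\mathbb R^n$ whose supremum is not attained can be non-constant, e.g.\ $-G$ for $G$ the fundamental solution of a uniformly elliptic operator.) Second, the subsolution inequality $\partial_i(F_{ij}(\nabla u_\infty)\partial_j w)\ge 0$ and the strong maximum principle both require control of the coefficients, but $D^2F$ can blow up on $D_F\subset P$, which is precisely where $w=0$; so the linearized operator may have unbounded coefficients exactly on $\{w=0\}$, which may have positive measure, and one cannot naively integrate by parts across it. Third, even granting the subsolution property, propagating $w\equiv M$ requires $w$ (hence $\nabla u_\infty$) to be continuous so that $\{w=M\}$ is closed; but continuity of $\nabla u_\infty$ is essentially the conclusion of the theorem and is not available for the blow-up limit a priori. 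These are not cosmetic: they are exactly the obstacles that the paper's Propositions \ref{SmallPert}--\ref{Connected} are designed to overcome quantitatively, without ever presupposing continuity of $\nabla u$ or attainment of a supremum.

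Two smaller remarks. The reduction to $\cite{DS}$ in the case $M=0$ is sound once one observes $D_G\subset D_F\cap P$ for $G=F|_P$ (a principal submatrix of a positive definite matrix is positive definite), and the Liouville statement you need does indeed follow from the quantitative modulus in $\cite{DS}$ by scaling. Finally, the theorem asserts a quantitative modulus of continuity for $\nabla u$ depending only on $n,F,\|\nabla u\|_{L^\infty}$; your qualitative compactness contradiction would also deliver this by a second compactness argument, but it is worth being explicit, since the paper obtains the modulus directly from the finite termination count $k_0$ in the chopping algorithm.
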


\begin{rem}
We conjecture that the assumption in Theorem \ref{C1} is optimal. That is, that there exists a singular minimizer in $\mathbb{R}^3$ where $D_F$ consists of four non-coplanar points (see the discussion in Section
\ref{Hedgehogs}).
\end{rem}

The starting point of Theorem \ref{C1} is the well-known fact that convex functions of $\nabla u$ are sub-solutions to the linearized Euler-Lagrange equation. Using this 
fact we show that $\nabla u(B_r)$ localizes as $r \rightarrow 0$ either to a point outside $D_F$ (in which case we are done), or to the convex hull of $D_F$.
This was observed in \cite{CF} in the case that $D_F$ is a convex set and $F = 0$ on $D_F$, motivated by models of traffic congestion.
The key observation in \cite{DS} is that in two dimensions, certain slightly non-convex functions of $\nabla u$ are also sub-solutions to the linearized equation.
If the convex hull of $D_F$ is two-dimensional, we can use higher-dimensional versions of these functions to further localize the gradients to a point.

\vspace{2mm}

To state our second result we let $x = (x_1,\,x_2) \in \mathbb{R}^{2n}$ with $x_i \in \mathbb{R}^n$. We define
\begin{equation}\label{Minimizer}
w(x) := \frac{1}{\sqrt{2}} \frac{|x_2|^2 - |x_1|^2}{|x|}.
\end{equation}
Then $w$ is a nontrivial one-homogeneous function on $\mathbb{R}^{2n}$ that is analytic outside of the origin. We show:

\begin{thm}\label{Counterex}
When $n \geq 2$, $w$ is a minimizer of a functional of the form (\ref{Functional}) with $F$ uniformly convex and $C^1$, and $D_F = \{|x_1|^2 = |x_2|^2\} \cap \sqrt{2}\,S^{2n-1}$.
\end{thm}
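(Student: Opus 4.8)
The plan is to construct $F$ by prescribing $\nabla F$ on the gradient image $\Sigma:=\nabla w(\mathbb{R}^{2n}\setminus\{0\})$ so that $w$ satisfies \eqref{EL}, and then extending $\nabla F$ to all of $\mathbb{R}^{2n}$ as the gradient of a uniformly convex $C^1$ function. Several features of $w$ are immediate and will be used throughout: $w$ is one-homogeneous and analytic away from $0$, hence Lipschitz, with $\nabla w$ zero-homogeneous; $\nabla w$ is genuinely non-constant, so $w$ fails to be differentiable at the origin; and $\Sigma$ is compact and contained in $\sqrt2\,\overline{B_1}$, with $|\nabla w|\le\sqrt2$ and equality exactly on the image under $\nabla w$ of the Simons cone $\{|x_1|=|x_2|\}$. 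Since, as observed in the Introduction, a Lipschitz weak solution of \eqref{EL} is automatically a minimizer of \eqref{Functional} when $F$ is convex, and since the single point $\{0\}$ is removable for the bounded field $\nabla F(\nabla w)$, the whole problem reduces to producing such an $F$ and checking \eqref{EL} on $\mathbb{R}^{2n}\setminus\{0\}$; then $w$ is the advertised singular minimizer.

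First I would use the $O(n)\times O(n)$ and swap symmetries of $w$ to reduce to two dimensions. Writing $s=|x_1|$, $t=|x_2|$ and seeking $F(p)=f(|p_1|,|p_2|)$, the functional becomes (up to a constant) $\int f(|u_s|,|u_t|)\,s^{n-1}t^{n-1}\,ds\,dt$ and \eqref{EL} becomes $\partial_s(s^{n-1}t^{n-1}f_\sigma(\nabla u))+\partial_t(s^{n-1}t^{n-1}f_\tau(\nabla u))=0$. In polar coordinates $(s,t)=\rho(\cos\omega,\sin\omega)$ one has $w=\tfrac{\rho}{\sqrt2}(\sin^2\omega-\cos^2\omega)$, so $\nabla w$ depends only on $\omega$ and sweeps out an embedded curve $\gamma(\omega)$ in the first quadrant, $\omega\in[0,\pi/2]$, running from $(\tfrac1{\sqrt2},0)$ through $\gamma(\pi/4)=(1,1)$ to $(0,\tfrac1{\sqrt2})$. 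A short computation gives $\gamma'(\pi/4)=0$, i.e. $\partial_\omega(\nabla w)$ vanishes on the Simons cone, so $D^2w$ acquires a second null direction there; geometrically $\gamma$ touches the diagonal $\{|p_1|=|p_2|\}$ at $(1,1)$ in a cusp, and the full-space image of this point is exactly $D_F=\{|p_1|=|p_2|\}\cap\sqrt2\,S^{2n-1}$.

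Next I would determine the prescription of $\nabla F$ on $\Sigma$. By symmetry $\nabla F(\nabla w)=(a(\omega)\,\widehat x_1,\,b(\omega)\,\widehat x_2)$, and \eqref{EL} reduces to the single first-order ODE $-a'\sin\omega+b'\cos\omega+(n-1)(a\sec\omega+b\csc\omega)=0$, with the natural boundary conditions $b(0)=0$, $a(\pi/2)=0$ forced by continuity of $\nabla F(\nabla w)$ across the coordinate subspaces. This is underdetermined, so there is an infinite-dimensional family of admissible prescriptions; I would choose one for which the induced assignment $\gamma(\omega)\mapsto(a(\omega),b(\omega))$ is monotone --- uniformly so for $\omega$ bounded away from $\pi/4$, and degenerating as $\omega\to\pi/4$ only in the direction transverse to the diagonal, which is what the trace equation $F_{ij}(\nabla w)w_{ij}=0$ forces since $D^2w$ loses an eigenvalue there. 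It is convenient to package $\nabla F(\nabla w)$ through a $(2n-1)$-homogeneous stream function and make the corresponding choice.

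The final step, which I expect to be \textbf{the main obstacle}, is the extension: one must build a globally monotone, continuous map $\nabla F$ on $\mathbb{R}^{2n}$ that agrees with the above prescription on $\Sigma$, is uniformly monotone (so $F$ is uniformly convex and $C^1$), and has $D^2F$ locally bounded and bounded below on $\mathbb{R}^{2n}\setminus D_F$ with exactly one eigenvalue tending to $+\infty$ along $D_F$. Off $\Sigma$ one has complete freedom, so the entire difficulty is concentrated near the cusp at $(1,1)$: the data prescribed on $\Sigma$ there is unavoidably degenerate (because $\Sigma$ is pinched and $|\nabla w|$ is maximal on $D_F$), and one must show this degeneracy can be realized while keeping $F$ uniformly convex and confining the blow-up of $D^2F$ to $D_F$ with a single divergent eigenvalue and no worse. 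This is where the precise local geometry of $w$ near the Simons cone is used, and where the restriction $n\ge2$ is essential --- if $n=1$ the set $D_F$ would be four points in $\mathbb{R}^2$, which cannot support a singular minimizer by the two-dimensional result of \cite{DS}, so the construction must and does fail there. The extension lemmas needed for this are collected in the Appendix; granting them, the observations of the first paragraph show that $w$ minimizes \eqref{Functional}.
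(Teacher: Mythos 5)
Your strategy is the same as the paper's: exploit the $O(n)\times O(n)$ plus swap symmetry to reduce to a one-dimensional problem on the gradient image $\Sigma=\nabla w(S^{2n-1})$, prescribe data for $(F,\nabla F)$ along $\Sigma$ satisfying the reduced Euler--Lagrange ODE, and then invoke an extension result to produce a global uniformly convex $C^1$ integrand. Your ODE $-a'\sin\omega+b'\cos\omega+(n-1)(a\sec\omega+b\csc\omega)=0$ is the angular form of what the paper writes as $h=\frac{1}{1+2k}\bigl(f''/\varphi''+k\,f'/\varphi'\bigr)$ in the graph parametrization $p_2=\varphi(p_1)$, and your identification of the cusp of $\Sigma$ at the Simons cone, with separation like $\mathrm{dist}^{3/2}$ from the diagonal, matches the Appendix.

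However, there is a genuine gap at the one place you flag as ``the main obstacle'' and then defer. The Extension Lemma that the Appendix supplies is not unconditional: it takes as input a pair $(g,\mathbf{v})$ on $\Sigma_v$ satisfying the \emph{global} quantitative separation inequality $g(\tilde p)-g(p)-\mathbf{v}(p)\cdot(\tilde p-p)\ge\gamma|\tilde p-p|^2$ for all $p,\tilde p\in\Sigma_v$, and this hypothesis is precisely what your argument does not establish. Saying you ``would choose'' a solution of the ODE that is monotone on $\Sigma$ and ``degenerates only transversally'' names the goal, not a construction: the ODE is underdetermined, and whether some admissible $(a,b)$ yields a strictly $\gamma$-monotone restriction on the closed curve $\Sigma_v$ --- including across the cusp where $\varphi''\sim(1-s)^{-1/2}$ blows up, and between the four symmetric arcs where the reflected data must still separate quadratically --- is exactly the delicate quantitative content of Section~\ref{CounterexProof}. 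The paper handles this by a very specific choice of $\eta=f''/\varphi''$ (even, concave, pinned at $1/2$ at the endpoint, bounded below by $\tfrac12(1+\delta^{-1/2}(1-s)^{1/2})$, normalized, constant away from the endpoints) and then a several-case verification of a lower bound $H(x,y)\ge c_0\max\{(1-x)^{1/2},(1-y)^{1/2}\}$ along the top arc, plus the reflection inequalities (\ref{Reflection}) to cover pairs on different arcs. None of this is automatic; for a careless choice of $(a,b)$ the condition fails near the cusp, so the proof cannot be closed by ``granting'' the extension lemma. You need to exhibit a concrete $(a,b)$ (equivalently $\eta$) and actually prove the uniform separation estimate before the Appendix becomes applicable.

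A smaller point: you assert that off $\Sigma$ one has ``complete freedom,'' but since the hypothesis of the Extension Lemma is a two-point inequality over all of $\Sigma_v$, the prescription on $\Sigma$ itself is already strongly constrained --- the freedom off $\Sigma$ is real but not the bottleneck. Your observation that the $n=1$ case ($D_F$ four points in $\mathbb{R}^2$) would contradict De Silva--Savin is correct and a nice sanity check, though it is not part of the proof; the actual place the construction uses $n\ge2$ (i.e.\ $k\ge1$) is in the term $k\,f'/\varphi'$ in the ODE, which is what lets $h$ stay bounded away from $\varphi'$ near the cusp and makes the reflection inequality (\ref{Reflection}) hold.
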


Our approach to Theorem \ref{Counterex} is based on the observation that when $n \geq 2$, the gradient image $\Sigma_w := \nabla w(\mathbb{R}^{2n} \backslash \{0\}) = \nabla w(S^{2n-1})$ 
is a saddle-shaped hypersurface that is smooth away from a ``cusp" singularity on $\{|x_1|^2 = |x_2|^2\} \cap \sqrt{2}\, S^{2n-1}$. This reflects that $D^2w$ has positive and negative eigenvalues, and thus
solves some elliptic equation. We then build the integrand $F$ near $\Sigma_w$ so that the Euler-Lagrange equation (\ref{EL}) is satisfied, and finally we make a global convex extension.
In previous work with Savin we took a similar approach to construct singular minimizers of functionals with large degeneracy set in $\mathbb{R}^3$, where
$D_F$ consists of two disconnected convex sets with nonempty interior \cite{MS}.


\subsection{The Case $n = 3$ and Hyperbolic Hedgehogs}\label{Hedgehogs}

To conclude the section we highlight a connection between our approach to Theorem \ref{Counterex} and classical differential geometry.

\vspace{2mm} 

Natural candidates for singular minimizers are one-homogeneous functions with Hessians that have indefinite sign. Indeed, such functions are invariant under the rescalings that preserve (\ref{EL}), 
and they solve some elliptic PDE. It is useful to identify a one-homogeneous function $u$ with its gradient image, a (possibly singular)
hypersurface $\Sigma_u$. The function $u$ is the support function of $\Sigma_u$, and the eigenvalues of $D^2u$ on $S^{n-1}$ are the principal radii of $\Sigma_u$.
The set $\Sigma_u$ is the parallel set a distance $A$ in the direction of the {\it inward} unit normal from the convex body $\Sigma_{u + A|x|}$, where $A$ is chosen large
enough that $D^2u + AI > 0$ on $S^{n-1}$. Such parallel surfaces to a convex body are known in the literature as ``hedgehogs" (see e.g. \cite{MM2}).

\vspace{2mm}

In dimension $n = 3$, a natural candidate for a singular minimizer thus corresponds to a hedgehog that is saddle-shaped away from its singularities, i.e. a parallel set a distance $A$ in the inward direction from a convex surface with
principal radii $r_1,\,r_2 > 0$ that satisfy $(r_1 - A)(r_2-A) \leq 0$. Alexandrov originally conjectured that the only such convex surfaces in $\mathbb{R}^3$ are spheres. He proved his conjecture for 
analytic convex surfaces (\cite{A1}, \cite{A2}). Thus, we cannot construct with our method a singular minimizer in $\mathbb{R}^3$ that is analytic outside of the origin (compare to Theorem \ref{Counterex}). 
For $C^{2}$ surfaces, Alexandrov's conjecture remained open for a long time (with at least one incorrect proof). It was resolved in $2001$ by a beautiful counterexample of Martinez-Maure (\cite{MM1}).
Martinez-Maure's hedgehog is built by gluing together four self-intersecting ``cross caps" with figure-eight cross sections that shrink to cusps (see Figure \ref{Hedgehog}). 
Motivated by this discussion and Theorem \ref{C1} we conjecture:

\begin{figure}
\begin{center}
    \includegraphics[scale=0.4, trim={0 7cm 30mm 0}, clip]{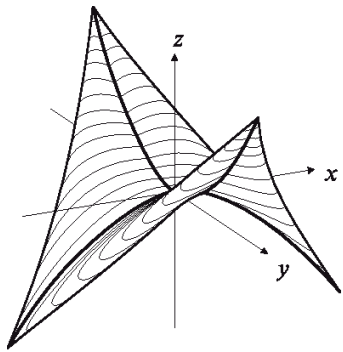}
    \caption{The hyperbolic hedgehog $\Sigma_h$ from \cite{MM1}.}
    \label{Hedgehog}
  \end{center}
\end{figure}

\begin{conj}\label{FourPointConjecture}
The support function $h$ of the hedgehog from \cite{MM1} is a one-homogeneous singular minimizer of a functional of the type (\ref{Functional}), where $D_F$ consists exactly of the four
cusps of $\Sigma_h$.
\end{conj}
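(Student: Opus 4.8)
The plan is to mimic the strategy behind Theorem \ref{Counterex}, reading off the integrand $F$ directly from the geometry of the Martinez–Maure hedgehog $\Sigma_h$. Let $h$ be the support function of $\Sigma_h$, a one-homogeneous function on $\mathbb{R}^3$ that is analytic away from the origin except at the four rays through the cusps. First I would work on $S^2$: away from the four cusps, $\Sigma_h$ is a smooth immersed surface with principal radii $r_1, r_2$ (the eigenvalues of $D^2 h|_{S^2} + h$, equivalently the eigenvalues of $D^2 h$ restricted to the tangent directions) satisfying $r_1 r_2 \le 0$, i.e.\ the Gauss curvature of the support-function parametrization has the correct (nonpositive) sign. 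The gradient image $\Sigma_h = \nabla h(S^2) \subset \mathbb{R}^3$ is then a saddle-shaped ``cusp hypersurface'' in gradient space, smooth away from the four points $p_1,\dots,p_4 = \nabla h(\text{cusp rays})$. One checks that $\nabla h$ is an immersion of $S^2 \setminus \{\text{cusps}\}$ onto $\Sigma_h \setminus \{p_i\}$ (this is where Martinez–Maure's explicit construction by gluing four cross-caps with figure-eight cross sections is used), so $\Sigma_h$ is a genuine $C^1$ hypersurface-with-singularities in $\mathbb{R}^3$ near which one can hope to define $F$.

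The second step is the local construction of $F$ near $\Sigma_h$. Since $h$ is one-homogeneous, $\det D^2 h = 0$ and the only nontrivial ``radius'' information is the pair $r_1, r_2$ on $S^2$; the key point is that, because $r_1 r_2 \le 0$, the linearized operator $D^2 h_{ij}(\xi)\, v_{ij}$ is \emph{not} elliptic but the nonlinear Euler--Lagrange equation $\mathrm{div}(\nabla F(\nabla h)) = 0$ can still be arranged. Concretely, $h$ is a solution of (\ref{EL}) for $F$ as soon as $\Delta_{\Sigma_h}(\text{something}) $ vanishes; following the approach of Theorem \ref{Counterex} and \cite{MS}, I would prescribe $F$ on a tubular neighborhood of $\Sigma_h$ by choosing its level sets and its normal derivative so that $\nabla F$ maps a neighborhood of each point of $\Sigma_h$ to a neighborhood of the corresponding ray, with the compatibility $\nabla F(\nabla h(\xi)) = $ (a one-homogeneous degree-zero function whose divergence vanishes because $h$ solves its own Euler--Lagrange equation by homogeneity). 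Near the four cusp points $p_i$ one builds $F$ with $D^2 F$ \emph{degenerating} (an eigenvalue $\to \infty$), matching the figure-eight-to-cusp collapse of $\Sigma_h$; this makes $D_F = \{p_1, p_2, p_3, p_4\}$. Then one verifies $F$ is strictly convex on the tubular neighborhood — this amounts to checking that $\Sigma_h$, as the graph of the (multivalued) normal map, is locally convex transverse to itself in the right sense, which is exactly the saddle condition $r_1 r_2 \le 0$ together with the cusp structure.

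The third step is the global convex extension: extend $F$ from the tubular neighborhood of $\Sigma_h$ to all of $\mathbb{R}^3$ keeping it strictly convex and $C^1$ (or $C^2$ away from $D_F$), using the convex-envelope/extension lemmas collected in the Appendix. Since $\Sigma_h = \nabla h(\mathbb{R}^3 \setminus \{0\})$ is a compact set (the image of $S^2$) and $h$ is one-homogeneous, only the behavior of $F$ on a bounded set matters for $h$ to be a minimizer, so the extension to infinity can be made with, say, quadratic growth. Finally, once $F$ is globally strictly convex with the prescribed behavior near $\Sigma_h$, convexity of $F$ promotes the weak solution $h$ of (\ref{EL}) to a minimizer of $E$, exactly as in the paragraph following (\ref{EL}), and $h \notin C^1$ because $\nabla h$ is genuinely discontinuous at the cusp rays (the figure-eight cross sections force the normal to jump).

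The main obstacle I expect is the \emph{strict convexity} of the local integrand $F$ near the self-intersections of $\Sigma_h$: unlike the Simons-cone example of Theorem \ref{Counterex}, where $\Sigma_w$ has a clean single cusp and an explicit algebraic description, the Martinez–Maure hedgehog is only $C^2$ (not analytic), is assembled from four cross-caps, and genuinely self-intersects, so a priori $\nabla h$ is not injective and ``$F$ on a neighborhood of $\Sigma_h$'' is ambiguous where the sheets cross. One must check that the self-intersection locus of $\Sigma_h$ is disjoint from the part of $\Sigma_h$ one needs for the comparison argument — or, more robustly, work on $S^2$ and use that the map $\xi \mapsto (\nabla h(\xi), h(\xi) - \nabla h(\xi)\cdot\xi \,)$ (point on the surface together with its tangent plane offset) \emph{is} injective, so $F$ can be reconstructed as a strictly convex function from this family of supporting planes provided they satisfy the right monotonicity. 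Verifying that monotonicity — that the one-parameter families of supporting hyperplanes to the would-be graph of $F$ rotate in a consistent direction across each cross-cap, right up to the cusps — is the technical heart of the matter, and is precisely where the delicate local geometry near the four cusps of $\Sigma_h$, as opposed to the global topology, enters.
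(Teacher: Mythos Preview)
This statement is a \emph{conjecture}, not a theorem; the paper does not prove it. In the remark immediately following the conjecture the author reports only that the local construction of $F$ near a single cusp can be carried out ``with some tedious calculation,'' while the global construction ``seems challenging'' and remains open. Your proposal correctly identifies the natural strategy---mimic the proof of Theorem \ref{Counterex}: prescribe $F$ on $\Sigma_h$ so that the Euler--Lagrange equation holds, then extend convexly---and your isolation of the self-intersections of $\Sigma_h$ as the principal new obstacle is apt. The figure-eight cross sections mean $\nabla h:S^2\to\mathbb{R}^3$ is not injective, so the separation condition (\ref{tangsep1}) underpinning Lemma \ref{ExtensionLemma} cannot hold as stated; one must somehow handle the multi-sheeted structure. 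But your proposal is an outline, not a proof: the ``technical heart'' you name at the end is exactly the open part of the problem, and you have not supplied an argument for it. In this sense there is no gap to diagnose beyond the fact that the conjecture is, as of the paper, unresolved.

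There is, however, a concrete error in your write-up. You assert that ``$h\notin C^1$ because $\nabla h$ is genuinely discontinuous at the cusp rays (the figure-eight cross sections force the normal to jump).'' This is false: the Martinez--Maure support function is $C^2$ on all of $S^2$ (see the second remark after the conjecture), so $\nabla h$ is $C^1$, and in particular continuous, everywhere on $\mathbb{R}^3\setminus\{0\}$. The cusps are singularities of the \emph{image} surface $\Sigma_h=\nabla h(S^2)$, not of $h$; they occur where a principal radius of $\Sigma_h$ vanishes, i.e.\ where $D^2h$ degenerates and $\nabla h$ fails to be an immersion. The only point where $h$ fails to be $C^1$ as a function on $\mathbb{R}^3$ is the origin, by one-homogeneity---exactly as for $w$ in Theorem \ref{Counterex}. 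This matters for your argument: the singular minimizer you are after is singular at $0$, not along the cusp rays, and the degeneracy of $F$ at the four points $D_F$ must be matched not to a jump in $\nabla h$ but to the collapse of $\Sigma_h$ there.
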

\noindent This would show that the geometric conditions on $D_F$ in Theorem $\ref{C1}$ are optimal. 

\begin{rem}
The surface $\Sigma_h$ can be written as the union of two graphs, which makes writing the Euler-Lagrange equation on 
$\Sigma_h$ relatively simple. Using this observation we can show that it is possible to construct $F$ locally (in particular, in a small neighborhood of a cusp), with some tedious 
calculation. It seems challenging to construct $F$ globally, but so far we do not see a fundamental obstruction.
\end{rem}

\begin{rem}
The regularity of $h$ in the example from (\cite{MM1}) is $C^2$. Smooth counterexamples to Alexandrov's conjecture, with (a version of) $\Sigma_h$ as a special case, were later constructed by Panina \cite{P}.
\end{rem}

\section{Proof of Theorem \ref{C1}}\label{C1Proof}
Choose $M_0$ large so that $D_F \subset B_{M_0}$, and let $M = \max\{M_0,\, \|u\|_{L^{\infty}(B_1)}\}$.
By a standard approximation argument, to prove Theorem \ref{C1} it suffices to assume $u,\,F \in C^{\infty}$ and show that the modulus of continuity of $\nabla u$ in $B_{1/2}$ depends only on $M$, the sets $\{\mathcal{O}_k\}$, and the moduli of continuity of $D^2F$ in the sets $\{B_M \cap \mathcal{O}_k\}$ (see e.g. \cite{CF}).

\subsection{Preliminaries}
We record some important preliminary results. Our argument is based on applying the following estimate of De Giorgi (the ``weak Harnack inequality") to various functions of $\nabla u$:
\begin{prop}\label{WeakHarnack}
Assume that $v \geq 0$ is in $H^1(B_1)$ and solves $\partial_i(a_{ij}(x)v_j) \geq 0$, with $a_{ij}$ bounded measurable and $\lambda I \leq (a_{ij}) \leq \lambda^{-1} I$ for some $\lambda > 0$.
Then for all $\mu > 0$, there exists $\nu(\mu,\,n,\,\lambda) > 0$ such that if 
$$\frac{|\{v > 0\} \cap B_1|}{|B_1|} \leq 1-\mu$$
then
$$\sup_{B_{1/2}} v \leq (1-\nu)\sup_{B_1}v.$$
\end{prop}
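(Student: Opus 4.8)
The plan is to recognize Proposition~\ref{WeakHarnack} as the ``measure-to-pointwise'' half of the De Giorgi--Nash--Moser theory and to prove it by reduction to a statement about supersolutions. First I would set $S := \sup_{B_1} v$ --- finite by De Giorgi's local boundedness estimate, or simply because $v$ is smooth in the application --- and pass to $w := S - v$, which is a nonnegative supersolution of $\partial_i(a_{ij} w_j) \leq 0$ satisfying $\{w = S\} = \{v = 0\}$, a set occupying at least the fraction $\mu$ of $B_1$. After dividing by $S$ we may assume $S = 1$, and the desired inequality $\sup_{B_{1/2}} v \leq (1-\nu)$ becomes $\inf_{B_{1/2}} w \geq \nu$. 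Thus it suffices to show: if $w \geq 0$ is a supersolution on $B_1$ with $|\{w \geq 1\} \cap B_1| \geq \mu |B_1|$, then $w \geq \nu$ on $B_{1/2}$ for some $\nu(\mu, n, \lambda) > 0$.

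The first ingredient is the Caccioppoli inequality for supersolutions: testing the weak inequality against $\varphi = \eta^2 (k - w)^+ \geq 0$ for a cutoff $\eta$ and absorbing terms, one obtains for concentric balls $B_r \subset B_R \subset B_1$
\[
\int_{B_r} |\nabla (w-k)^-|^2 \leq \frac{C(\lambda)}{(R-r)^2} \int_{B_R} \big((w-k)^-\big)^2 \leq \frac{C(\lambda)\, k^2}{(R-r)^2}\, |\{w < k\} \cap B_R|,
\]
where $(w-k)^- := \max(k-w,0)$. The second ingredient is the De Giorgi iteration lemma: there is $\delta_0(n,\lambda) > 0$ such that if $|\{w < 1\} \cap B_{3/4}| \leq \delta_0$ then $w \geq 1/2$ on $B_{1/2}$. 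This follows from the Caccioppoli inequality together with the Sobolev inequality, by the standard iteration over the levels $k_j = \frac{1}{2} + 2^{-j-1}$ and radii $r_j = \frac{1}{2} + 2^{-j-2}$, for which $A_j := |\{w < k_j\} \cap B_{r_j}|$ obeys a recursion of the form $A_{j+1} \leq C^j A_j^{1 + 2/n}$ that forces $A_j \to 0$ once $A_0$ is small enough. By the homogeneity of the equation the same statement holds with $1$ and $1/2$ replaced by $2^{-N}$ and $2^{-N-1}$ for any $N$.

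It then remains to produce a level $2^{-N}$, with $N$ depending only on $n,\lambda,\mu$, at which $|\{w < 2^{-N}\} \cap B_{3/4}| \leq \delta_0$. Here I would use the De Giorgi isoperimetric lemma: for $w \in H^1(B_{3/4})$ and $0 < k < l$,
\[
(l - k)\, |\{w \leq k\} \cap B_{3/4}|^{\frac{n-1}{n}} \leq \frac{C(n)}{|\{w \geq l\} \cap B_{3/4}|^{\frac{n-1}{n}}}\, \int_{\{k < w < l\} \cap B_{3/4}} |\nabla w|.
\]
Applying this with $k = 2^{-j-1}$, $l = 2^{-j}$, bounding $|\{w \geq 2^{-j}\} \cap B_{3/4}| \geq |\{w \geq 1\} \cap B_{3/4}| \geq c(n)\mu$ from below, and using Cauchy--Schwarz on the right together with $|\{2^{-j-1} < w < 2^{-j}\} \cap B_{3/4}| \leq a_j - a_{j+1}$ and the Caccioppoli bound $\int_{B_{3/4}} |\nabla (w - 2^{-j})^-|^2 \leq C(n,\lambda)\, 2^{-2j}$, one arrives at $a_{j+1}^{2(n-1)/n} \leq C(n,\lambda)\mu^{-2}(a_j - a_{j+1})$ for $a_j := |\{w < 2^{-j}\} \cap B_{3/4}|$. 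Since the $a_j$ are nonincreasing and bounded by $|B_1|$, summing in $j$ shows $a_N \to 0$ at a rate depending only on $n$ and $\mu$; choosing $N$ so that $a_N \leq \delta_0$ and invoking the iteration lemma gives $w \geq 2^{-N-1}$ on $B_{1/2}$, so $\nu = 2^{-N-1}$ works.

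The genuinely delicate part is the bookkeeping inside the iteration lemma --- balancing the geometric factor $C^j$ against the superlinear exponent $1 + 2/n$ --- and the derivation of the isoperimetric lemma from Sobolev--Poincar\'e; both are classical, so in the write-up I would either reproduce these standard computations or simply cite the statement (De Giorgi's original paper, or a textbook treatment), since Proposition~\ref{WeakHarnack} enters the rest of the argument only as a black box.
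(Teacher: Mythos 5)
Your reduction --- pass to $w := \sup_{B_1}v - v$, observe that $w\geq 0$ is a supersolution whose top level set occupies at least the fraction $\mu$ of $B_1$, and prove $\inf_{B_{1/2}}w\geq\nu\sup_{B_1}v$ --- is exactly what the paper does; the paper's entire proof consists of the sentence ``apply the weak Harnack inequality for supersolutions (Theorem $8.18$ in Gilbarg--Trudinger) to $\sup_{B_1}v - v$.'' Where you differ is only in expanding that black box: you re-derive the needed measure-to-pointwise estimate from scratch via Caccioppoli, the De Giorgi iteration lemma, and the De Giorgi isoperimetric lemma. That is the standard proof of the very estimate being cited, so this is the same route at a finer level of detail rather than a genuinely different argument.

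One step does not hold as written, though. You claim $|\{w\geq 1\}\cap B_{3/4}|\geq c(n)\mu$, but the hypothesis only gives $|\{w\geq 1\}\cap B_1|\geq\mu|B_1|$, and that does not control $|\{w\geq 1\}\cap B_{3/4}|$: the level set could, as far as this measure inequality is concerned, sit entirely in the annulus $B_1\setminus B_{3/4}$. The standard patch is to run the isoperimetric recursion (and the De Giorgi iteration that follows it) not at radius $3/4$ but at a radius $\rho(\mu)$ close enough to $1$ that $|B_1\setminus B_\rho|\leq\frac{\mu}{2}|B_1|$, which forces $|\{w\geq 1\}\cap B_\rho|\geq\frac{\mu}{2}|B_1|$; this only changes the Caccioppoli constant from $(3/4-1/2)^{-2}$ to $(\rho-1/2)^{-2}$, still a function of $n,\lambda,\mu$ alone. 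Alternatively one can pigeonhole a sub-ball on which $\{w\geq 1\}$ has definite density and spread the positivity to $B_{1/2}$ by chaining the weak Harnack estimate. The same subtlety is in fact implicit in the paper's one-line citation, since Theorem $8.18$ of Gilbarg--Trudinger places the $L^p$ average on $B_{2R}$ and the infimum on $B_R$ with $B_{4R}\subset\Omega$, which does not directly match the balls $B_1,B_{1/2}$ of the Proposition without a similar covering or chaining step. So the plan is sound, but the ball bookkeeping needs to be made precise before the constants can honestly be claimed to depend only on $n,\lambda,\mu$.
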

\noindent To prove Proposition \ref{WeakHarnack} apply the weak Harnack inequality for supersolutions (Theorem $8.18$ in \cite{GT}) to $\sup_{B_1}v - v$.

\vspace{2mm}

We now discuss the types of functions of $\nabla u$ that Proposition \ref{WeakHarnack} applies to. We denote the linearized Euler-Lagrange operator by $L_F$. That is,
$$L_F(v) := \text{div}(D^2F(\nabla u)\nabla v) = \partial_i(F_{ij}(\nabla u)v_j).$$ 
The key observation is that if $\eta$ is slightly concave in only one direction, then $\eta(\nabla u)$ is a subsolution of $L_F$ where $\nabla u$ avoids $D_F$.
For $\Omega \subset \mathbb{R}^n$ let $\mathcal{N}_{\delta}(\Omega)$ denote the $\delta$-neighborhood of $\Omega$. We have:
\begin{lem}\label{NonConvex}
Assume $\eta$ is a smooth function in a neighborhood of $\nabla u(B_1)$.
For any $\rho \in (0,\,1)$, there exists $\lambda(\rho,\,F,\,M,\,n) > 0$ such that if $\nabla u(B_1) \cap \{\eta > 0\} \subset B_M \backslash \mathcal{N}_{\rho}(D_F)$, and
in $\nabla u(B_1) \cap \{\eta > 0\}$ the eigenvalues $\gamma_1 \leq \gamma_2 \leq ... \leq \gamma_n$ of $D^2\eta$ satisfy $\gamma_2 > 0$ and $\gamma_1 \geq -\lambda \gamma_2$, then
$$L_F(\eta_+(\nabla u)) \geq 0.$$
\end{lem}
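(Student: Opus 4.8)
The plan is to compute $L_F(\eta(\nabla u))$ directly on the open set where $\eta(\nabla u)>0$, using the chain rule together with the Euler-Lagrange equation, and to show the resulting quadratic form in the second derivatives of $u$ is nonnegative when $D^2\eta$ is only mildly nonconvex. First I would differentiate the relation $\partial_i(F_i(\nabla u))=0$: since $u$ is smooth, each $u_k$ solves the linearized equation $L_F(u_k)=0$. Then for a smooth $\eta$ one has, where $\eta(\nabla u)>0$,
\begin{equation*}
L_F(\eta(\nabla u)) = \partial_i\big(F_{ij}(\nabla u)\,\eta_k(\nabla u)\,u_{kj}\big) = \eta_k(\nabla u)\,L_F(u_k) + F_{ij}(\nabla u)\,\eta_{k\ell}(\nabla u)\,u_{kj}u_{\ell i},
\end{equation*}
and the first term vanishes. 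So the sign of $L_F(\eta(\nabla u))$ is the sign of the quadratic form $Q := F_{ij}\,\eta_{k\ell}\,u_{kj}u_{\ell i}$, with all coefficients evaluated at $\nabla u$. Writing $P$ for the $n\times n$ matrix $P_{ij}=u_{ij}$ (symmetric), $A=D^2F(\nabla u)>0$, $B=D^2\eta(\nabla u)$, this is $Q=\mathrm{tr}(APB P)$ — a trace that is manifestly nonnegative when $B\ge 0$, and the content of the lemma is that it stays nonnegative when $B$ has one slightly negative eigenvalue.

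The key quantitative step is a linear-algebra estimate: if $A>0$ with $k^{-1}I\le A\le kI$ on the relevant set (this is exactly where the hypothesis $\nabla u(B_1)\cap\{\eta>0\}\subset B_M\setminus\mathcal N_\rho(D_F)$ enters, giving a bound $k=k(\rho)$ via the sets $\mathcal O_k$), and $B$ has eigenvalues $\gamma_1\le\dots\le\gamma_n$ with $\gamma_2>0$ and $\gamma_1\ge-\lambda\gamma_2$, then $\mathrm{tr}(APBP)\ge 0$ for every symmetric $P$, provided $\lambda$ is small depending on $k$ and $n$. To see this, decompose $B = B_+ - \gamma_1^- e_1\otimes e_1$ where $e_1$ is the unit eigenvector for $\gamma_1$ and $B_+\ge 0$ has its smallest eigenvalue $\ge \gamma_2>0$ on $e_1^\perp$; then $\mathrm{tr}(APB_+P)\ge \gamma_2\,\mathrm{tr}(AP\,\Pi_{e_1^\perp}\,P) \ge c(k)\,\gamma_2\,|\Pi_{e_1^\perp}P|^2$, while the negative contribution is $\gamma_1^-\,(APe_1)\cdot(Pe_1) \le k\lambda\gamma_2\,|Pe_1|^2$. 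The point is that $Pe_1$ and $\Pi_{e_1^\perp}P$ are not independent: $|Pe_1|^2 = |P_{11}|^2 + |\Pi_{e_1^\perp}Pe_1|^2$, and the off-diagonal block $\Pi_{e_1^\perp}Pe_1$ also appears in $|\Pi_{e_1^\perp}P|^2$. So the only genuinely dangerous term is the pure $e_1$-$e_1$ entry $P_{11}^2$, which contributes $-\gamma_1^-(A_{11}+\dots)P_{11}^2$ — but this is controlled because along the $e_1$ direction the trace identity forces balancing diagonal terms only if $P$ is not rank-one along $e_1$; here one uses that $\eta(\nabla u)$ being a subsolution only needs to survive testing against $P=e_1\otimes e_1$ for which $\mathrm{tr}(APBP) = \gamma_1(Ae_1)\cdot e_1$, and this is where $\gamma_2>0$ alone is \emph{not} enough — one needs $\gamma_1\ge 0$, OR one exploits that $P=u$ cannot be exactly rank-one when $u$ solves \eqref{EL} unless $\nabla u$ is locally constant. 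I would therefore argue: either $Pe_1=0$ at the point (then $Q=\mathrm{tr}(AP B_+P)\ge 0$ trivially), or $\Pi_{e_1^\perp}Pe_1\ne 0$ or $P$ has a nontrivial action on $e_1^\perp$; using $\mathrm{tr}(P\cdot(\text{something}))$-type relations from $L_F(u_k)=0$ is not needed — instead, Cauchy–Schwarz on the mixed block plus the spectral gap $\gamma_2$ absorbs $\lambda\gamma_2 P_{11}^2$ once $\lambda \le c(k,n)$, because the subsolution inequality is only required to hold at points where $\eta(\nabla u)>0$, and at a boundary point of $\{\eta(\nabla u)>0\}$ we may take the inequality in the viscosity/distributional sense.

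The main obstacle is precisely this last point: making rigorous that the pointwise computation of $Q\ge 0$ on the open set $\{\eta(\nabla u)>0\}$ yields $L_F(\eta_+(\nabla u))\ge 0$ \emph{distributionally across the free boundary} $\partial\{\eta(\nabla u)>0\}$. This is a standard maneuver — $\eta_+(\nabla u)=\max\{\eta(\nabla u),0\}$ is a sup/truncation of a subsolution with $0$, and the maximum of two subsolutions of the same linear equation is a subsolution — but it must be stated carefully because $\eta$ itself is not globally convex, so $\eta(\nabla u)$ is a subsolution only on $\{\eta>0\}$ and the constant $0$ is trivially a subsolution everywhere; $\max$ of the two is then a subsolution by the usual lemma (no positivity of $D^2\eta$ needed off $\{\eta>0\}$). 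I would record this truncation fact explicitly and then reduce everything to the quadratic-form inequality above, choosing $\lambda(\rho,F,M,n)$ at the end to be small enough that $k(\rho)\lambda \le c(k(\rho),n)$.
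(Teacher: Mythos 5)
Your setup is correct and matches the paper: both compute $L_F(\eta(\nabla u)) = F_{ij}\eta_{kl}u_{jk}u_{li} = \mathrm{tr}(A P B P)$ with $A = D^2F(\nabla u)$, $B = D^2\eta(\nabla u)$, $P = D^2u$, diagonalize $B$, and use the uniform ellipticity $k^{-1}I \le A \le kI$ on $B_M\setminus\mathcal N_\rho(D_F)$. Your remark about the truncation $\eta_+$ being a subsolution via $\max$ of subsolutions is a legitimate point that the paper leaves implicit.

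However, there is a genuine gap at the heart of the argument, and you in fact name it and then walk away from it. The ``linear-algebra estimate'' you propose --- that $\mathrm{tr}(APBP)\ge 0$ for \emph{every} symmetric $P$ once $\lambda$ is small --- is simply false: take $P = e_1\otimes e_1$ with $e_1$ the eigenvector of $\gamma_1<0$; then $\mathrm{tr}(APBP) = \gamma_1\,A_{11} < 0$, and no Cauchy--Schwarz on mixed blocks or spectral gap on $e_1^\perp$ can rescue this, because all those terms vanish. You observe precisely this (``one needs $\gamma_1\ge 0$, OR one exploits that $P$ cannot be exactly rank-one along $e_1$ when $u$ solves \eqref{EL}''), but then explicitly declare that ``using $\mathrm{tr}(P\cdot(\text{something}))$-type relations\dots is not needed.'' This is exactly backwards: the constraint $\mathrm{tr}(AP) = F_{ij}(\nabla u)u_{ij} = 0$ is the indispensable ingredient. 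The paper's proof isolates it as the identity $F_{11}u_{11} = -\sum_{(i,j)\ne(1,1)}F_{ij}u_{ij}$, which (by ellipticity) gives the quantitative rank-one exclusion $u_{11}^2 \le C(n,m)\sum_{(i,j)\ne(1,1)}u_{ij}^2$. Combined with the inequality $(\lambda^{-1}m^{-2}-1)\sum_{(i,j)\ne(1,1)}u_{ij}^2 < u_{11}^2$ that would follow from $L_F(\eta(\nabla u))(x_0)<0$, this is a contradiction for $\lambda$ small. Your proposal never establishes anything in place of this constraint, so it does not close the case where $P$ is nearly rank-one in the direction of the negative eigenvalue, and the final ``reduce everything to the quadratic-form inequality'' step does not go through. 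The appeal to viscosity/distributional interpretation at the free boundary is irrelevant to this issue, which occurs at interior points of $\{\eta(\nabla u)>0\}$.
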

\noindent Here $\eta_+ := \max\{\eta,\,0\}$.
\begin{proof}
Using that $L_F(u_k) = 0$ we compute
$$L_F(\eta(\nabla u)) = \text{div}(D^2F(\nabla u) \nabla (\eta(\nabla u))) = F_{ij}u_{jk}\eta_{kl}u_{li}.$$
At a fixed point $x_0 \in \{\eta(\nabla u) > 0\}$ choose coordinates so that $\eta_{kl}(\nabla u(x_0)) = \gamma_k\delta_{kl}$. Summing over $l$ we obtain
$$L_F(\eta(\nabla u))(x_0) = \sum_{k = 1}^n \gamma_k \sum_{i,\,j = 1}^n F_{ij}u_{ki}u_{kj}.$$
For some $m$ large depending on $\rho,\,F,\,M$ we have $\nabla u(B_1) \cap \{\eta > 0\} \subset \mathcal{O}_m.$ 
Since $\gamma_1 \geq -\lambda \gamma_2$ in $\{\eta(\nabla u) > 0\}$ we conclude that
$$\gamma_2^{-1}L_F(\eta(\nabla u))(x_0) \geq m^{-1} \sum_{k = 2}^{n} |\nabla u_k(x_0)|^2 - m\,\lambda |\nabla u_1(x_0)|^2.$$
If $L_F(\eta(\nabla u))(x_0) < 0$ then the above inequality gives
$$(\lambda^{-1}m^{-2}-1) \sum_{(i,\,j) \neq (1,\,1)} u_{ij}^2(x_0) < u_{11}^2(x_0).$$
On the other hand, the equation $F_{11}u_{11} = -F_{ij}u_{ij}|_{(i,\,j) \neq (1,\,1)}$ gives
$$u_{11}^2 \leq C(n,\,m)\sum_{(i,\,j) \neq (1,\,1)} u_{ij}^2$$
in $\{\eta(\nabla u) > 0\}$. The previous two inequalities contradict each other for $\lambda(n,\,m)$ small.
\end{proof}

\vspace{2mm}


In order to apply Proposition \ref{WeakHarnack} and Lemma \ref{NonConvex}, we need $\nabla u$ to be close to $D_F$ in sets of positive measure.
The alternative is that $u$ nearly solves a non-degenerate equation. To handle this situation we will also use a ``flatness implies regularity" result for $u$:

\begin{prop}\label{SmallPert}
Assume that $a_{ij}$ are smooth elliptic coefficients on $\mathbb{R}^n$ that satisfy $\lambda I \leq (a_{ij}) \leq \lambda^{-1} I$ in $B_{\rho}(p)$ for some fixed $\lambda,\, \rho > 0,\, p \in \mathbb{R}^n$.
There exists $\epsilon > 0$ depending on $\rho,\,n,\,\lambda$ and the modulus of continuity of $a_{ij}$ in $B_{\rho}(p)$ such that if
$v \in C^{\infty}(B_1)$ solves $a_{ij}(\nabla v)v_{ij} = 0$ and
$$\|v - l_p\|_{L^{\infty}(B_1)} \leq \epsilon$$
for some linear function $l_p$ with $\nabla l_p = p$, then
$$\nabla v(B_{1/2}) \subset B_{\rho}(p).$$
\end{prop}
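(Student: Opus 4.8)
The plan is to follow the classical De Giorgi–Nash–Moser perturbation scheme, treating $a_{ij}(\nabla v)$ as a fixed elliptic coefficient matrix once we have the right control on $\nabla v$, and then bootstrapping from an $L^\infty$ bound on $v$ to an $L^\infty$ bound on $\nabla v$. The key point is that near $p$ the equation $a_{ij}(\nabla v)v_{ij}=0$ is uniformly elliptic, so any standard flatness-implies-regularity machinery applies \emph{provided} we can guarantee a priori that $\nabla v$ stays in $B_\rho(p)$; the difficulty is that this is precisely the conclusion we want, so we must set up a continuity/contradiction argument.

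First I would argue by compactness and contradiction. Suppose the statement fails: there is a sequence of smooth solutions $v_m$ of $a_{ij}^{(m)}(\nabla v_m)(v_m)_{ij}=0$ (with coefficients satisfying the stated ellipticity and a uniform modulus of continuity in $B_\rho(p)$) and linear functions $l_p^{(m)}$ with $\nabla l_p^{(m)}=p$, such that $\|v_m - l_p^{(m)}\|_{L^\infty(B_1)}\to 0$ but $\nabla v_m(B_{1/2})\not\subset B_\rho(p)$ for each $m$. Normalizing by subtracting $l_p^{(m)}$, the functions $w_m := v_m - l_p^{(m)}$ tend to $0$ uniformly on $B_1$ and solve the same equation (with $p$ absorbed into the argument of $a_{ij}^{(m)}$). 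By interior gradient estimates for uniformly elliptic quasilinear equations in divergence-type form — available as long as $\nabla w_m$ stays in a compact neighborhood of $0$, which we enforce by first working on a slightly smaller ball and using a barrier/maximum-principle argument to propagate smallness of $w_m$ into smallness of $\nabla w_m$ on an intermediate ball — we obtain uniform $C^{1,\alpha}$ bounds for $w_m$ on $B_{3/4}$. Passing to a subsequential limit, $w_m \to 0$ in $C^1_{loc}(B_{3/4})$, which forces $\nabla w_m \to 0$ uniformly on $B_{1/2}$, i.e. $\nabla v_m(B_{1/2}) \subset B_\rho(p)$ for $m$ large, a contradiction.

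The main obstacle is the circularity I flagged: one cannot directly invoke uniform ellipticity of $a_{ij}(\nabla v)$ without first knowing $\nabla v$ is confined near $p$, and ellipticity is only assumed on $B_\rho(p)$. I would resolve this with a two-step localization. Step one: use the $L^\infty$-smallness of $v - l_p$ together with the comparison principle against explicit quadratic barriers built from the equation (valid wherever the gradient has not yet exited $B_\rho(p)$) to show that, for $\epsilon$ small, $\nabla v$ cannot reach $\partial B_\rho(p)$ inside $B_{3/4}$ — a standard continuity-of-the-gradient-image argument, using that the gradient image is connected and contains $p$ in the limit. Once the gradient is trapped in, say, $B_{\rho/2}(p)$ on $B_{3/4}$, step two applies the genuinely uniformly elliptic theory (De Giorgi–Nash–Moser plus Schauder, or directly the compactness argument above) to get the $C^1$ bound and the desired inclusion on $B_{1/2}$.

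An alternative, more hands-on route avoiding compactness: differentiate the equation, observe that each $\partial_k v$ solves a linear uniformly elliptic equation in divergence form $\partial_i(F_{ij}(\nabla v)\,\partial_j(\partial_k v))=0$ on the region where $\nabla v \in B_\rho(p)$, apply De Giorgi's oscillation decay (Proposition \ref{WeakHarnack} style) to $\partial_k v - p_k$ to get $\mathrm{osc}_{B_r}\nabla v \leq C r^\alpha$, and feed this into a barrier argument that, combined with $\|v-l_p\|_{L^\infty}\leq \epsilon$, forces $|\nabla v - p| \leq C\epsilon^\beta$ on $B_{1/2}$. I expect the compactness argument to be cleaner to write and I would use it, reserving the direct argument as a remark. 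The only genuinely delicate bookkeeping is tracking that the region $\{\nabla v \in B_\rho(p)\}$ is open, connected, and eventually exhausts $B_{3/4}$; everything else is routine elliptic regularity.
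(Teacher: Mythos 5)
The paper does not prove Proposition \ref{SmallPert} itself; after a one-line rescaling heuristic ($w := \epsilon^{-1}(v - l_p)$) it defers to Savin \cite{S} and to \cite{CF} for the proof in the gradient-degenerate setting. Your proposal has a genuine gap precisely at the point where the hard work of those references lies. You correctly flag the circularity: interior gradient estimates, De Giorgi oscillation decay, linearizing the equation, and comparison with barriers all require uniform ellipticity, and ellipticity is guaranteed only where $\nabla v$ already lies in $B_\rho(p)$. But your resolution (``step one,'' the barrier / continuity-of-the-gradient-image argument) does not close the loop. A maximum-principle comparison needs a domain on which the equation is uniformly elliptic together with boundary data to compare against; the natural candidate $\{x : \nabla v(x)\in B_\rho(p)\}$ is an open set whose extent you have no control over, and on its boundary in $B_1$ you have $\nabla v \in \partial B_\rho(p)$, which gives you nothing to compare. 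Smallness of $v-l_p$ in $L^\infty$ provides no pointwise bound on $\nabla v$ at all, and connectivity of $\nabla v(B_{1/2})$ does not prevent it from being a large connected set that exits $B_\rho(p)$ no matter how small $\epsilon$ is. Your ``alternative, hands-on route'' has the same circularity, and additionally writes the linearization in divergence form $\partial_i(F_{ij}(\nabla v)\partial_j v_k)=0$, which is only correct for $a_{ij}=F_{ij}$ with $F$ variational, not for the non-divergence equation as stated.

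The missing ingredient is Savin's measure-theoretic localization via touching with sliding paraboloids: at a contact point the $L^\infty$-closeness of $v$ to $l_p$ \emph{does} force $\nabla v$ close to $p$, so the equation is uniformly elliptic at contact points; an ABP-type estimate that uses the equation only at contact points then shows the contact set, hence $\{|\nabla v - p| < \rho/2\}\cap B_{3/4}$, has measure at least $(1-\sigma)|B_{3/4}|$ once $\epsilon$ is small. That step uses ellipticity exclusively where it is available and is what breaks the circularity. Only after it does the weak Harnack / improvement-of-flatness iteration (and the compactness you sketch) become available; without it your contradiction argument has no uniform modulus of continuity for $\nabla v_m$ from which to extract a subsequential limit. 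So the proposal has the right scaffolding but omits the load-bearing step.
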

\noindent Heuristically, $w := \epsilon^{-1}(v-l_p)$ solves $a_{ij}(p + \epsilon \nabla w)w_{ij}  = 0$ which is nearly a constant-coefficient equation for $\epsilon$ small.
The idea of Proposition \ref{SmallPert} is due to Savin \cite{S}, who treated equations with degeneracy in the Hessian of $v$. For a proof of the proposition as stated (with degeneracy in the gradient of $v$) see \cite{CF}. 

An easy consequence of Proposition \ref{SmallPert} is:
\begin{lem}\label{Linearization}
For any $\rho \in (0,\,1)$ there exist $\epsilon_1,\,\mu_1(\rho,\,F,\,M,\,n) > 0$ such that if 
$$\frac{|\{\nabla u \in B_{\epsilon_1}(p)\} \cap B_r|}{|B_r|} \geq 1-\mu_1$$
for some $p \in B_{2M} \backslash \mathcal{N}_{2\rho}(D_F)$ then
$$\nabla u(B_{r/2}) \subset B_{\rho}(p).$$
\end{lem}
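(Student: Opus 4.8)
The plan is to reduce this to Proposition \ref{SmallPert} by a rescaling and a covering argument. First I would rescale: set $v(y) := r^{-1} u(x_0 + ry)$ for an appropriate center $x_0$, so that $v$ solves the same equation $F_{ij}(\nabla v)v_{ij} = 0$ on $B_1$ and $\nabla v(B_1) = \nabla u(B_r)$. The hypothesis says $\nabla v$ is within $\epsilon_1$ of $p$ on all but a $\mu_1$-fraction of $B_1$. Since $p \in B_{2M} \setminus \mathcal{N}_{2\rho}(D_F)$, the coefficients $a_{ij}(\cdot) := F_{ij}(\cdot)$ are uniformly elliptic with bounds depending on $F, M, \rho$ on the ball $B_{\rho}(p)$ (indeed on $B_{2\rho}(p)$), and have a modulus of continuity controlled by the data; call the resulting constant from Proposition \ref{SmallPert} $\epsilon = \epsilon(\rho, F, M, n)$, applied with the pair $(\lambda, \rho)$ replaced by $(\lambda(F,M,\rho), \rho)$.

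The main step is to upgrade the measure-theoretic closeness hypothesis to the $L^\infty$-closeness hypothesis $\|v - l_p\|_{L^\infty(B_{3/4})} \le \epsilon$ needed by Proposition \ref{SmallPert}, where $l_p$ is the linear function with $\nabla l_p = p$ and $l_p(x_0) = v(x_0)$ (or with the constant chosen as the average of $v - p\cdot y$ over $B_{3/4}$). This is where I would use the interior gradient bound: $u$ is a Lipschitz weak solution of (\ref{EL}), and by De Giorgi–Nash–Moser applied to the equation $F_{ij}(\nabla u) u_{ij} = 0$ where $F$ is smooth and uniformly elliptic on a neighborhood of $\nabla u(B_1)$ — wait, that's not available globally. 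Instead I would argue more robustly: $w := v - p\cdot y - c$ satisfies $\|\nabla w\|_{L^\infty(B_1)} \le C(M)$ (Lipschitz bound carries through rescaling since it is scale-invariant), and $|\{|\nabla w| > \epsilon_1\} \cap B_1| \le \mu_1 |B_1|$. Choosing the constant $c$ to be the average of $v - p\cdot y$ over $B_{3/4}$, Poincaré's inequality gives
$$\|w\|_{L^\infty(B_{3/4})} \le \|w\|_{L^2(B_{3/4})}^{1 - \theta}\, \|w\|_{L^\infty(B_{3/4})}^{\theta} + (\text{interpolation error})$$
— more simply, $\|w\|_{L^\infty(B_{3/4})} \le C\|\nabla w\|_{L^\infty} \le C(M)$ trivially, and $\|w\|_{L^1(B_{3/4})} \le \epsilon_1 |B_{3/4}| + C(M)\mu_1$, so by the Lipschitz oscillation control (if $w$ is large at one point it is large on a definite-size ball) we get $\|w\|_{L^\infty(B_{3/4})} \le C(M)(\epsilon_1 + \mu_1^{1/n})$. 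Choosing $\epsilon_1, \mu_1$ small depending on $\epsilon(\rho,F,M,n)$ and $M$ makes this $\le \epsilon$.

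Finally, a covering argument removes the restriction to $B_{3/4}$: applying Proposition \ref{SmallPert} on $B_{3/4}(z)$ for each $z$ in a suitable net of $B_{1/2}$ (after translating and rescaling $v$ so each such ball becomes the unit ball, which only costs dimensional constants), we conclude $\nabla v(B_{1/2}) \subset B_\rho(p)$, i.e. $\nabla u(B_{r/2}) \subset B_\rho(p)$. I expect the main obstacle to be bookkeeping the dependence of constants — in particular ensuring that the ellipticity and modulus-of-continuity bounds for $F_{ij}$ on $B_{2\rho}(p)$ depend only on the allowed data $(n, F, M)$ and on $\rho$ (uniformly over all admissible $p \in B_{2M} \setminus \mathcal{N}_{2\rho}(D_F)$), which follows because such $p$ lie in the compact set $\overline{B_{2M}} \cap \mathcal{O}_m$ for $m = m(\rho, F, M)$ large, exactly as in the proof of Lemma \ref{NonConvex}.
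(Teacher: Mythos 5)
Your proposal is correct and follows essentially the paper's proof: rescale to $r = 1$, upgrade the measure-theoretic hypothesis to an $L^\infty$ bound $\|u - l_p\|_{L^\infty(B_1)} \le \epsilon_0$ via standard embeddings, and then apply Proposition \ref{SmallPert}. One remark on efficiency and a small slip: the Morrey embedding $W^{1,p}(B_1) \hookrightarrow C^{0,1-n/p}(\overline{B_1})$ for fixed $p > n$ directly bounds $\|w\|_{L^\infty(B_1)}$ (with $w = u - l_p$ normalized to have zero average) by $\|\nabla w\|_{L^p(B_1)} \lesssim \epsilon_1 + C(M)\,\mu_1^{1/p}$, so Proposition \ref{SmallPert} applies on all of $B_1$ and your covering step is unnecessary; moreover as written the covering is not quite legitimate, since the balls $B_{3/4}(z)$ with $z$ ranging over a net of $B_{1/2}$ stick out of $B_1$, where the equation is not available — one would have to use balls of radius at most $1/2$.
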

\begin{proof}
After taking $u \rightarrow r^{-1}u(r x)$ we may assume that $r= 1$. Since $u$ solves $F_{ij}(\nabla u)u_{ij} = 0$,
by Proposition \ref{SmallPert} there exists $\epsilon_0 > 0$ depending on $\rho,\,F,\,M,\,n$ such that if
$$\|u - l_p\|_{L^{\infty}(B_1)} \leq \epsilon_0,$$
for some linear function $l_p$ with $\nabla l_p = p$, then $\nabla u(B_{1/2}) \subset B_{\rho}(p)$. The above inequality holds by standard embeddings if we take e.g. $\epsilon_1 < c(n)\epsilon_0$ and
take $\mu_1$ small depending on $M,\,n,\,\epsilon_0$.
\end{proof}

Our approach to Theorem \ref{C1} is to first show that as $r \rightarrow 0$, the sets $\nabla u(B_r)$ localize to the convex hull of $D_F$, and
then to show that if this set is two-dimensional, they localize to a point. We treat these two results separately in the following sub-sections, and then combine them.

\subsection{Localization to the Convex Hull}
Let $K_F$ denote the convex hull of $D_F$. In this subsection we show:
\begin{prop}\label{ConvexLocalization}
For any $\rho > 0$, there exists $s(\rho,\,F,\,M,\,n) > 0$ such that either $\nabla u(B_{s}) \subset B_{\rho}(p)$ for some $p \in B_{2M} \backslash \mathcal{N}_{2\rho}(D_F)$, or $\nabla u(B_{s}) \subset \mathcal{N}_{4\rho}(K_F)$.
\end{prop}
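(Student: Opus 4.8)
\section*{Proof proposal for Proposition \ref{ConvexLocalization}}

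The plan is to iterate the weak Harnack inequality over dyadic scales, applied to a single \emph{convex barrier} of $\nabla u$ that measures how far $\nabla u$ pokes out past $K_F$, and to fall back on the flatness estimate of Lemma \ref{Linearization} on the scales where $\nabla u$ refuses to approach $K_F$. First I fix a smooth convex $\eta=\eta_\rho\geq 0$ on $\mathbb R^n$ with $\{\eta>0\}$ disjoint from $\mathcal N_{3\rho}(D_F)$ and with $\operatorname{dist}(q,K_F)-4\rho\leq\eta(q)\leq\operatorname{dist}(q,K_F)+\rho$; such an $\eta$ is produced by mollifying $q\mapsto\operatorname{dist}\!\big(q,\mathcal N_{4\rho}(K_F)\big)$, which is convex because $\mathcal N_{4\rho}(K_F)$ is convex. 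Since $\eta$ is convex and $D^2F\geq 0$, the function $\eta(\nabla u)$ is a nonnegative subsolution of $L_F$ on the open set $\{\eta(\nabla u)>0\}$, where $\nabla u$ avoids $D_F$; replacing $F_{ij}(\nabla u)$ by $\delta_{ij}$ on the complement of that set turns $\eta(\nabla u)$ into a nonnegative subsolution of a uniformly elliptic operator on all of $B_1$, with ellipticity constant $\lambda(\rho,F,M)$ coming from $\{\eta>0\}\cap B_M\subset\mathcal O_m$ for a suitable $m(\rho,F,M)$. (The inequality across the free boundary $\partial\{\eta(\nabla u)>0\}$ is automatic because $\eta(\nabla u)\in C^1$ vanishes to first order there.)

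Set $b(r):=\sup_{B_r}\eta(\nabla u)$, which is monotone in $r$ with $b(1)\leq 2M$, and run the following dichotomy at each dyadic scale, with threshold $\mu\leq\mu_1(\rho,F,M,n)$ taken from Lemma \ref{Linearization}. In \emph{Case A}, $|\{\eta(\nabla u)>0\}\cap B_r|\leq(1-\mu)|B_r|$, and Proposition \ref{WeakHarnack} gives $b(r/2)\leq(1-\nu)b(r)$ with $\nu=\nu(\mu,n,\lambda)>0$. In \emph{Case B}, $|\{\eta(\nabla u)>0\}\cap B_r|>(1-\mu)|B_r|$, so off a subset of $B_r$ of relative measure $\leq\mu$ the gradient $\nabla u$ lies in the good region $R:=\overline{B_M}\setminus\mathcal N_{3\rho}(K_F)$, on which $F$ is $C^2$ and uniformly convex. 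If Case A holds at every scale $2^{-j}$ for $0\leq j\leq k_1$, where $k_1=k_1(\rho,F,M,n)$ is chosen so that $(1-\nu)^{k_1}\cdot 2M<\rho$, then $b(2^{-k_1})<\rho$, hence $\operatorname{dist}(\nabla u(x),K_F)<4\rho+\rho$ on $B_{2^{-k_1}}$; after a harmless rescaling of $\rho$ this is the second alternative with $s=2^{-k_1}$. So it suffices to treat the first scale $r\geq 2^{-k_1-1}$ at which Case B occurs, and since the first alternative passes to smaller balls, the single choice $s:=2^{-k_1-1}$ then works uniformly.

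To finish I must show that Case B at scale $r$ forces the first alternative. Here $\nabla u$ lies in $R$ (where $F$ is nondegenerate) on all but a $\mu$-fraction of $B_r$, and the goal is to upgrade this to $\nabla u(B_{r/2})\subset B_\rho(p)$ for some $p\in B_{2M}\setminus\mathcal N_{2\rho}(D_F)$. The mechanism is that $\nabla u$ can only poke out past a supporting hyperplane of $K_F$ in a set of directions $e$ that becomes ``frozen'' as $r\to 0$: running the weak Harnack dichotomy on the \emph{solution} $u_e=e\cdot\nabla u$ of $L_F$ (legitimate wherever $\nabla u$ avoids $D_F$), one sees that for each such $e$ and each level $a$ between $h_{K_F}(e)$ and $\lim_r\sup_{B_r}u_e$ the inequality $|\{u_e>a\}\cap B_r|\leq(1-\mu)|B_r|$ cannot persist over many scales (iterating it would drive $\sup_{B_r}u_e$ below its limit), so $\{u_e>a\}$ must have density close to $1$ in $B_r$; intersecting this over a finite net of the frozen directions traps $\nabla u$ in a convex region of diameter $<\epsilon_1(\rho,F,M,n)$ around a point $p$ at distance $>2\rho$ from $D_F$, and Lemma \ref{Linearization} yields $\nabla u(B_{r/2})\subset B_\rho(p)$. (An equivalent route is to first use Lemma \ref{Linearization} to pass from ``$\nabla u$ in $R$ in high density'' to ``$\nabla u(B_{r'})\subset R$ entirely'' at a slightly smaller scale $r'$, after which $F_{ij}(\nabla u)$ is uniformly elliptic on $B_{r'}$, so De Giorgi's oscillation decay applied to each $u_k$ and iterated shrinks $\nabla u(B_{r'/2^j})$ to a single point of $R$.)

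The main obstacle is precisely this Case B step. The barrier-plus-weak-Harnack machinery is, by nature, only able to push $\nabla u$ toward the convex set $K_F$ and cannot by itself localize the gradients to a point; one is forced to feed in the linearization/flatness estimate, and to apply it one must argue that whenever $\nabla u$ does \emph{not} approach $K_F$ it is in fact concentrated in measure near a \emph{single} point away from $D_F$. The delicate part there is the geometric bookkeeping of which supporting hyperplanes of $K_F$ stay active for $\nabla u(B_r)$ as $r\to 0$, and controlling the diameter of the region cut out by them. By contrast, the remaining points are routine: the number of iterations, and hence $s$, depends only on $\rho,F,M,n$, because $b(1)\lesssim M$ and each Case A step contracts $b$ by the fixed factor $1-\nu$, so one reaches one of the two alternatives within $k_1(\rho,F,M,n)$ steps; and the uniform ellipticity constants used throughout depend only on $\rho,F,M$ through the sets $\mathcal O_m$.
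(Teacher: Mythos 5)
Your overall architecture (iterate weak Harnack against a convex barrier; fall back on the flatness estimate) matches the paper, and your ``Case A'' --- applying Proposition \ref{WeakHarnack} directly to the nonnegative subsolution $\eta(\nabla u)$ when $\{\eta(\nabla u)>0\}$ has small density --- is a legitimate variant of the paper's chopping step. The genuine gap, which you yourself flag, is Case B, and it is not merely a technical loose end: it is caused by basing the dichotomy on the wrong quantity. Your split is governed by the total measure of $\{\eta(\nabla u)>0\}\cap B_r$, i.e.\ of $\{\nabla u \text{ far from } K_F\}$. When that measure is large, all you know is that $\nabla u$ avoids $\mathcal N_{3\rho}(K_F)$ with high density in $B_r$; $\nabla u(B_r)$ may still be spread over a large region of the good set, say an annulus far from $K_F$. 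That is not the hypothesis of Lemma \ref{Linearization}, which needs $\nabla u$ to lie with high density in a \emph{single} ball $B_{\epsilon_1}(p)$. The ``equivalent route'' you offer --- using Lemma \ref{Linearization} to pass from ``$\nabla u\in R$ in high density'' to ``$\nabla u(B_{r'})\subset R$ entirely'' --- misreads that lemma: it upgrades density near a \emph{point} to containment near a point, not density in a big set to containment in the set. The ``frozen directions'' sketch does not repair this either: knowing $\{u_e>a\}$ has high density for various $e$ and levels $a$ does not cut out a region of diameter $<\epsilon_1$, and the argument ``iterating would drive $\sup u_e$ below its limit'' is a non-quantitative compactness statement, whereas the proposition requires a scale $s$ depending only on $\rho,F,M,n$.

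The paper sidesteps this entirely by basing the dichotomy on a different quantity: for each $p\in B_{2M}\setminus\mathcal N_{2\rho}(D_F)$, is the density of $\{\nabla u\in B_{\epsilon_1}(p)\}$ in $B_r$ above or below $1-\mu_1$? If above for some such $p$, Lemma \ref{Linearization} applies immediately (alternative one). If below for every such $p$, the Chopping Lemma (Lemma \ref{Chopping}) applies: for each direction $e$ one chops off a thin slab $\{p\cdot e\geq c_e\}\cap\{\beta<t\}$ of diameter $<\epsilon_1$, which is therefore covered by a single ball $B_{\epsilon_1}(q)$; the density hypothesis then guarantees $\{u_e>c_e\}$ has density $<1-\mu_1$ in $B_r$, so the weak Harnack inequality applied to $(u_e-c_e)_+$ (a solution of $L_F$ on its positivity set) lowers $\sup u_e$, and intersecting over $e$ lowers $\sup\beta(\nabla u)$ by a fixed $\delta$. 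The thin-slab trick is precisely the piece your proposal lacks: it links the \emph{negation} of the flatness hypothesis directly to a density condition weak Harnack can exploit, and it is applied to the linear functions $u_e$ rather than to the convex barrier itself, so no separate ``Case B'' argument is needed.
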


\noindent In this subsection we call a constant universal if it depends only on $\rho,\,M,\,F,\,n$.
Let $\beta$ be a smooth uniformly convex function on $\mathbb{R}^n$ such that 
$$\mathcal{N}_{3\rho}(K_F) \subset \{\beta \leq 0\} \subset \mathcal{N}_{4\rho}(K_F),$$ 
with $B_{2M} \subset \{\beta < \tilde{M}\}$ for some universal $\tilde{M}$. Let $\epsilon_1,\,\mu_1 > 0$ be the universal constants from Lemma \ref{Linearization},
corresponding to $\rho$.

\begin{lem}\label{Chopping}
There exists $\delta > 0$ universal such that if $\sup_{B_r}\beta(\nabla u) > 0$ and
$$\frac{|\{\nabla u \in B_{\epsilon_1}(p)\} \cap B_r|}{|B_r|} < 1-\mu_1$$
for all $p \in B_{2M} \backslash \mathcal{N}_{2\rho}(D_F)$, then 
$$\sup_{B_{r/2}}\beta(\nabla u) \leq \sup_{B_r}\beta(\nabla u) - \delta.$$
\end{lem}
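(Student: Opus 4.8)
The plan is to apply the weak Harnack inequality (Proposition \ref{WeakHarnack}) to a suitable subsolution of the linearized operator $L_F$ built from $\beta$. The natural candidate is $v := \left(\sup_{B_r}\beta(\nabla u) - \beta(\nabla u)\right)_+$, or equivalently to work with $\eta := \beta - \sup_{B_r}\beta(\nabla u) + \text{(small margin)}$ composed with $\nabla u$; the point is that $\beta$ is uniformly convex, hence $D^2\beta > 0$, so on the region where $\eta(\nabla u) > 0$ the hypotheses of Lemma \ref{NonConvex} are automatically met (all eigenvalues of $D^2\eta = D^2\beta$ are positive, so the condition $\gamma_1 \geq -\lambda\gamma_2$ is trivial) \emph{provided} $\nabla u$ stays in $B_M$ and away from $D_F$ there. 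Since $\{\beta > 0\} \subset \mathcal{N}_{4\rho}(K_F)$ is at distance bounded below from $D_F$... wait — this is exactly backwards, so one must instead note $\{\beta(\nabla u) > 0\}$ need not avoid $D_F$. The fix is that we are looking at $v = \sup_{B_r}\beta(\nabla u) - \beta(\nabla u)$, which is a \emph{supersolution} on all of $\{\beta(\nabla u) < \sup\}$; this set contains $D_F$, so instead we should apply Lemma \ref{NonConvex} to $\eta = \beta$ directly to get $L_F(\beta_+(\nabla u)) \geq 0$ only where $\nabla u$ avoids a neighborhood of $D_F$ — but $\beta_+(\nabla u)$ vanishes precisely on $\mathcal{N}_{3\rho}(K_F) \supset D_F$, so $\{\beta_+(\nabla u) > 0\}$ does avoid $\mathcal{N}_{3\rho}(D_F)$, and we may take $\rho' = 3\rho$ (or any fixed fraction) in Lemma \ref{NonConvex}. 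Hence $\beta_+(\nabla u)$ is a genuine $L_F$-subsolution on $B_r$, with ellipticity constant $\lambda$ universal (depending on $\rho, F, M, n$, since $\nabla u(B_r) \cap \{\beta > 0\}$ lies in some fixed $\mathcal{O}_m$).

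Next I would rescale to $B_1$ via $u \mapsto r^{-1}u(rx)$, which preserves \eqref{EL} and all the relevant sets, so it suffices to treat $r = 1$. Apply Proposition \ref{WeakHarnack} to $v := \beta_+(\nabla u)$, which is a nonnegative $H^1$ subsolution of a uniformly elliptic equation with ellipticity $\lambda$ universal. To invoke the conclusion we need the measure bound: $\frac{|\{v > 0\} \cap B_1|}{|B_1|} \leq 1 - \mu$ for some universal $\mu$. This is where the hypothesis of the lemma enters. The complementary set $\{v = 0\} = \{\beta(\nabla u) \leq 0\} \supset \{\nabla u \in \mathcal{N}_{3\rho}(K_F)\}$; I claim that if it had small measure, we could find a point $p$ on which $\nabla u$ concentrates, contradicting the hypothesis. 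More precisely: if $|\{\beta(\nabla u) \leq 0\} \cap B_1|/|B_1| < \mu$, then $|\{\beta(\nabla u) > 0\} \cap B_1|/|B_1| > 1 - \mu$, and on this set $\nabla u$ lies in the compact set $\overline{B_{2M}} \cap \{\beta \geq 0\}$, which is a fixed compact subset of $\mathbb{R}^n$; covering it by finitely many balls $B_{\epsilon_1/2}(p_j)$ with $p_j \in B_{2M}$, by pigeonhole one of them captures $\nabla u$ on a set of density $\geq (1 - \mu)/N$. Choosing $\mu$ small enough (universal, depending on the fixed number $N$ of balls and on $\mu_1$) makes this density exceed $1 - \mu_1$; but then, since these $p_j$ lie outside $\mathcal{N}_{2\rho}(D_F)$ — as $\beta(p_j)$ is bounded below by a universal positive constant on the portion of the cover meeting $\{\beta \geq \text{something}\}$; one must take the cover of $\{\beta \geq \delta_0\}$ for small universal $\delta_0$ and handle the thin shell $\{0 < \beta < \delta_0\}$ by taking $\mu$ even smaller — we contradict the hypothesis of Lemma \ref{Chopping} that no such $p$ exists. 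Hence $|\{v > 0\} \cap B_1| / |B_1| \leq 1 - \mu$.

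With the measure condition in hand, Proposition \ref{WeakHarnack} gives $\sup_{B_{1/2}} v \leq (1 - \nu) \sup_{B_1} v$ for universal $\nu$. Undoing the rescaling, $\sup_{B_{r/2}} \beta_+(\nabla u) \leq (1-\nu)\sup_{B_r}\beta_+(\nabla u)$. Finally, this multiplicative decay is converted to the additive decay claimed, $\sup_{B_{r/2}}\beta(\nabla u) \leq \sup_{B_r}\beta(\nabla u) - \delta$: since $\sup_{B_r}\beta(\nabla u) > 0$ by hypothesis, we have $\sup_{B_r}\beta_+(\nabla u) = \sup_{B_r}\beta(\nabla u) =: S$, and also $S = \sup_{B_r}\beta(\nabla u) \leq \tilde M$ is bounded above universally (as $\nabla u(B_r) \subset B_M \subset \{\beta < \tilde M\}$), so $\nu S$ is small but not controlled below unless $S$ is bounded below — so one sets $\delta$ to handle the two cases: if $S \geq \delta_1$ for a universal $\delta_1$, then $\nu S \geq \nu \delta_1$ and we win with $\delta = \nu\delta_1$; if $S < \delta_1$, then already $\sup_{B_r}\beta(\nabla u) < \delta_1$ puts $\nabla u(B_r)$ very close to $\{\beta \leq 0\} \subset \mathcal{N}_{4\rho}(K_F)$, and a separate (easy) argument — or simply absorbing this into the alternative of Proposition \ref{ConvexLocalization} at the next stage — closes the gap.

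\medskip

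I expect the main obstacle to be the bookkeeping in the second paragraph: converting "the zero set of $\beta(\nabla u)$ is not small" into a genuine concentration point $p$ that lies in $B_{2M} \setminus \mathcal{N}_{2\rho}(D_F)$, while simultaneously keeping the ellipticity constant $\lambda$ in Lemma \ref{NonConvex} universal. The subtlety is that $\beta(\nabla u)$ could be positive but with $\nabla u$ hugging the boundary $\partial\{\beta \leq 0\}$ — i.e. near $\mathcal{N}_{4\rho}(K_F)$, hence possibly not far from $D_F$ in the pointwise sense — so the covering argument must be set up on $\{\beta \geq \delta_0\}$ for a carefully chosen universal $\delta_0$, with the thin region $\{0 < \beta < \delta_0\}$ swept into the small-measure error by further shrinking $\mu$. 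Everything else — the rescaling, the application of De Giorgi, the multiplicative-to-additive conversion — is routine, and the positivity of $D^2\beta$ makes the use of Lemma \ref{NonConvex} essentially free (no near-convexity is needed here; that refinement is reserved for the later localization-to-a-point step).
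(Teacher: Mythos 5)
Your overall plan (rescale, find a subsolution of $L_F$, apply the weak Harnack inequality, convert decay of the subsolution into decay of $\beta$) is the right shape, and your observations about convexity of $\beta$ making Lemma~\ref{NonConvex} trivial to apply, and about $\{\beta>0\}$ lying automatically outside $\mathcal{N}_{2\rho}(D_F)$, are correct. But the specific choice $v=\beta_+(\nabla u)$ breaks the argument at two places, and the paper avoids both by ``chopping'' with linear functions $u_e - c_e$ over all directions $e$ rather than with $\beta$ itself.

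First, the measure bound needed for Proposition~\ref{WeakHarnack} does not follow from the hypothesis of the lemma for your choice of $v$. The hypothesis gives a density bound for the preimage of a \emph{single} ball $B_{\epsilon_1}(p)$, while $\{v>0\}=\{\beta(\nabla u)>0\}$ is the preimage of a large region. Your covering-and-pigeonhole argument cannot close this: covering $\overline{B_M}\cap\{\beta\geq 0\}$ by $N$ balls and invoking pigeonhole only yields one ball with density at least $(1-\mu)/N$, which is roughly $1/N$ and is nowhere near the threshold $1-\mu_1$ required to contradict the hypothesis. Indeed, $\nabla u$ can be spread over several $\epsilon_1$-balls in $\{\beta>0\}$, each with density well below $1-\mu_1$, while $|\{\beta(\nabla u)>0\}\cap B_1|/|B_1|$ stays close to $1$. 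The paper sidesteps this entirely: for a fixed direction $e$, the cutoff $c_e$ is chosen so that $\{p\cdot e\geq c_e\}\cap\{\beta\leq t\}$ has diameter $<\epsilon_1$. Thus $\{(u_e-c_e)_+>0\}$ \emph{is} contained in the preimage of a single $\epsilon_1$-ball centered far from $D_F$, and the hypothesis gives the measure bound directly, with $\mu=\mu_1$.

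Second, even granting the measure bound, the weak Harnack inequality yields only multiplicative decay $\sup_{B_{1/2}}\beta_+(\nabla u)\leq(1-\nu)\sup_{B_1}\beta_+(\nabla u)$, and the decrement $\nu t$ degenerates when $t=\sup_{B_1}\beta(\nabla u)$ is small. You flag this and propose to absorb the small-$t$ case into the alternative of Proposition~\ref{ConvexLocalization}, but that amounts to proving a weaker statement than the lemma asserts, and then one would have to redo the stopping argument in Proposition~\ref{ConvexLocalization} to check it still works. The paper's directional construction again dodges the problem: $c_e$ is chosen a \emph{universal} distance $\delta_0$ below $s_e:=\sup_{\{\beta<t\}}(p\cdot e)$, so each slab shrinks by the universal amount $\nu_1\delta_0$, and then the uniform convexity of $\beta$ converts the intersection $\bigcap_e\{p\cdot e\leq s_e-\nu_1\delta_0\}$ into $\{\beta<t-\delta\}$ with $\delta$ universal, independent of $t$. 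That last geometric step is exactly what makes the additive decrement possible, and there is no analogue of it in the $\beta_+(\nabla u)$ approach.
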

\begin{proof}
After taking $u \rightarrow r^{-1}u(r x)$ we may assume that $r = 1$.
Let $0 < t := \sup_{B_1}\beta(\nabla u) \leq \tilde{M}$. For any unit vector $e$ let $s_e := \sup_{\{\beta < t\}} (p \cdot e)$. There is some universal $\delta_0 > 0$ (independent of $e$) such that for some $c_e \leq s_e - \delta_0$ we have 
$$\text{diam}(\{p \cdot e \geq c_e\} \cap \{\beta < t\}) < \epsilon_1$$ 
and that $D^2F$ has universal ellipticity constant in $\{p \cdot c_e > 0\}$.
By the hypotheses we may apply Proposition \ref{WeakHarnack} to $v := (u_e - c_e)_+$ with $\mu = \mu_1$ to conclude that $u_e \leq (1-\nu_1)s_e + \nu_1 c_e \leq s_e - \nu_1\delta_0$ in $B_{1/2}$, with $\nu_1 > 0$ universal. (Here we use that the coefficients of $L_F$ have universal ellipticity constant in $\{v > 0\}$; we can replace
the coefficients by e.g. $\delta_{ij}$ in $\{v = 0\}$ without changing the equation for $v$).
Since $\cap_{e \in S^{n-1}} \{p \cdot e \leq s_e - \nu_1\delta_0\} \subset \{\beta < t - \delta\}$ for some universal $\delta > 0$ the proof is complete.
\end{proof}

\begin{proof}[{\bf Proof of Proposition \ref{ConvexLocalization}}]
Apply the following algorithm for $k \geq 0$: if one of the hypotheses of Lemma \ref{Chopping} is not satisfied in $B_{2^{-k}}$, then stop. We either have $\nabla u(B_{2^{-k}})
\subset \{\beta \leq 0\} \subset \mathcal{N}_{4\rho}(K_F)$, or we can apply Lemma \ref{Linearization} to conclude $\nabla u(B_{2^{-k-1}}) \subset B_{\rho}(p)$ for some $p \in B_{2M} \backslash \mathcal{N}_{2\rho}(D_F)$.
Otherwise, we apply Lemma \ref{Chopping}. The algorithm terminates after at most $k_0$ steps with $\tilde{M} - k_0 \delta \leq 0$.
\end{proof}

\subsection{Localization Beyond the Convex Hull}
In this subsection we show that if $\nabla u(B_1)$ is sufficiently close to a two-dimensional affine subspace, then 
as $r \rightarrow 0$ the gradients $\nabla u(B_r)$ localize to a connected component of $D_F$. 

Let $(p,\,q) \in \mathbb{R}^n$ with $p \in \mathbb{R}^2$ and $q \in \mathbb{R}^{n-2}$, and assume that $D_F \subset \{q = 0\}$.
Let $8\rho_0$ be the smallest distance between a pair of points in $D_F$. In this subsection we call constants depending on $\rho_0,\,F,\,M,\,n$ universal.

\begin{prop}\label{Connected}
There exist $\sigma_0,\,r_0 > 0$ universal such that if $\nabla u(B_1) \subset \{|q| < \sigma_0\}$ then either $\nabla u(B_{r_0}) \subset
B_{\rho_0}(p)$ for some $p \in B_{2M} \backslash \mathcal{N}_{2\rho_0}(D_F)$ or $\nabla u(B_{r_0}) \subset \mathcal{N}_{3\rho_0}(D_F)$.
\end{prop}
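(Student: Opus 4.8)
The plan is to run a localization algorithm analogous to the one in the proof of Proposition \ref{ConvexLocalization}, but now using a sub-solution built from a function $\eta$ that is slightly concave in the $q$-directions. Since $\nabla u(B_1) \subset \{|q| < \sigma_0\}$ with $\sigma_0$ small, we know from Proposition \ref{ConvexLocalization} (after a preliminary dyadic rescaling) that we may also assume $\nabla u(B_1) \subset \mathcal{N}_{4\rho}(K_F)$ for $\rho$ as small as we like, where $K_F \subset \{q = 0\}$; the alternative that $\nabla u(B_s) \subset B_\rho(p)$ for some $p$ away from $D_F$ is one of the two conclusions we want. So the real content is: starting from gradients confined to a thin neighborhood of the two-dimensional set $K_F$, show they localize to a $3\rho_0$-neighborhood of the \emph{individual points} of $D_F$, i.e. that the gradient image cannot "bridge" between two distinct connected components (points) of $D_F$.

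The key step is to produce a test function to feed into Lemma \ref{NonConvex}. Fix two points $P_1 \neq P_2$ of $D_F$ and let $e \in \mathbb{R}^2$ be a unit vector with $(P_1 - P_2)\cdot e \geq 8\rho_0$. Consider a function of the form
\begin{equation*}
\eta(p,q) = (p\cdot e) - c - \Lambda\left(\tfrac{1}{2}|q|^2 + \Psi(p)\right),
\end{equation*}
where $\Psi$ is a small convex function of the two-dimensional variable $p$, $\Lambda$ is a large universal constant, and $c$ is a level to be chosen. On the region $\{|q| < \sigma_0\} \cap \mathcal{N}_{4\rho}(K_F) \cap \{\eta > 0\}$ — which, for $c$ chosen appropriately and $\sigma_0,\rho$ small, separates a small cap around $P_1$ from the rest of $D_F$ and stays a definite distance $\gtrsim \rho_0$ away from every point of $D_F$ — the Hessian $D^2\eta$ has one large positive eigenvalue $\approx \Lambda$ in each $q$-direction and one eigenvalue $\approx -\Lambda \Psi_{ee}$ coming from the $p$-direction along $e$, plus the other $p$-direction. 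The point is that $\Psi$ can be taken with $|D^2\Psi|$ as small as we like, so that the single negative eigenvalue $\gamma_1$ satisfies $\gamma_1 \geq -\lambda \gamma_2$ with $\gamma_2 = \Lambda$ (note $n \geq 3$ here, so there genuinely is a second positive eigenvalue $\gamma_2$ coming from a $q$-direction; this is exactly where the hypothesis $D_F \subset \{q=0\}$ with $n \geq 3$ is used, and why a flat two-dimensional $D_F$ sitting inside an ambient space of dimension $\geq 3$ is the right setting). By Lemma \ref{NonConvex}, $\eta_+(\nabla u)$ is then a sub-solution of $L_F$.

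With this sub-solution in hand I would iterate: apply Proposition \ref{WeakHarnack} to $\eta_+(\nabla u)$ on dyadic balls, in each direction $e$ separating a point of $D_F$ from the others, decreasing $\sup \eta_+(\nabla u)$ by a universal amount at each step. Either at some stage the measure hypothesis of Proposition \ref{WeakHarnack} fails — meaning $\nabla u$ concentrates near some $p$ in a set of large measure, in which case Lemma \ref{Linearization} gives $\nabla u(B_{r_0}) \subset B_{\rho_0}(p)$, and either $p$ is away from $D_F$ (first alternative) or $p$ is near $D_F$, hence near a single point of it since $D_F$ is $8\rho_0$-separated, giving the second alternative — or the iteration runs to completion and drives the gradients into the intersection over all such $e$ and all points $P_i$ of the sublevel sets $\{\eta \leq 0\}$, which is precisely a $3\rho_0$-neighborhood of $D_F$ once $\rho$ is small enough (one has to also use the localization to $\mathcal{N}_{4\rho}(K_F)$ to control the directions \emph{within} the plane $\{q=0\}$, exactly as the function $\beta$ was used in the previous subsection). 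The main obstacle I expect is the geometric bookkeeping: choosing the finite family of functions $\eta$ (one family per point of $D_F$, one member per separating direction, each with its own levels $c$) so that their common sublevel set is contained in $\mathcal{N}_{3\rho_0}(D_F)$ while \emph{simultaneously} every $\{\eta > 0\}$ region stays uniformly away from $D_F$ (so Lemma \ref{NonConvex} applies with uniform constants) and the measure-decrement from Proposition \ref{WeakHarnack} is uniform; making all these choices universal and compatible, and verifying the algorithm terminates in a universal number of steps, is the delicate part. Everything else is a direct transcription of the argument already carried out for Proposition \ref{ConvexLocalization}.
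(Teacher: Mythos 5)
There is a genuine gap, at two levels.

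First, your test function does not satisfy the hypotheses of Lemma~\ref{NonConvex}. As written, $\eta(p,q) = p\cdot e - c - \Lambda\bigl(\tfrac12|q|^2 + \Psi(p)\bigr)$ has $D^2\eta = -\Lambda\,\mathrm{diag}\bigl(D^2_p\Psi,\,I_q\bigr) \leq 0$: since $\Psi$ is convex, \emph{every} eigenvalue is non-positive. In particular the $q$-directions carry eigenvalue $-\Lambda$, not $+\Lambda$ as you claim. Even after the obvious sign correction (say $\eta = p\cdot e - c + \Lambda\bigl(\tfrac12|q|^2 - \Psi(p)\bigr)$), the two $p$-directions still both give non-positive eigenvalues: the $e$-direction contributes $-\Lambda\Psi_{ee} \leq 0$ (the linear term $p\cdot e$ adds nothing to the Hessian), and the $e^\perp$-direction contributes $-\Lambda\Psi_{e^\perp e^\perp} \leq 0$. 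Thus $\gamma_1\leq\gamma_2\leq 0$, and the requirement $\gamma_2 > 0$ in Lemma~\ref{NonConvex} fails no matter how small $|D^2\Psi|$ is. The paper's $\eta_{A,p_0} = e^{A^2|q|^2/2 - A|p - p_0|} - e^{-A\epsilon_1/2}$ sidesteps this because the exponential radial profile makes the $p$-\emph{radial} direction strictly convex (eigenvalue $\approx A^2\eta_A$), so only the $p$-\emph{tangential} direction is concave (eigenvalue $\approx -A\eta_A/|p-p_0|$), and the ratio is controlled by taking $A$ large. Contrary to your remark, the needed positive eigenvalue $\gamma_2$ does not come from the $q$-directions; this mechanism already works in $n = 2$ with no $q$-variable at all.

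Second, and more fundamentally, the geometry cannot close. With $|D^2\Psi|$ small and $|q|<\sigma_0$ tiny, the sub-level sets $\{\eta \leq 0\}$ are essentially half-planes $\{p\cdot e \leq c'\}$, and an intersection of half-planes is convex. Driving the gradient image into such an intersection can therefore only localize $\nabla u$ to a convex set containing $D_F$, i.e.\ at best to (a thin slab around) $K_F$ — precisely what Proposition~\ref{ConvexLocalization} already gives. The content of Proposition~\ref{Connected} is to go \emph{beyond} the convex hull to the non-convex set $\mathcal{N}_{3\rho_0}(D_F)$, which no intersection of half-planes can reach when $D_F$ is disconnected. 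The paper instead chooses $\{\eta_{A,p_0}>0\}$ to be (approximately) a small disk $\{|p-p_0|<\epsilon_1/2\}$; Lemma~\ref{Chopping2} then shows that once $\nabla u(B_r)$ avoids $B_{\epsilon_1/4}(p_0)$, the next scale avoids the slightly larger disk $B_{\epsilon_1/4+\delta_0}(p_0)$, and this avoidance is \emph{propagated along lines} $l_i$ that miss $\mathcal{N}_{2\rho_0}(D_F)$: avoiding $B_{\epsilon_1/4}(p_{ij})$ at scale $2^{-k}$ forces avoidance of $B_{\epsilon_1/4}(p_{i,j+1})$ at scale $2^{-k-1}$ when $|p_{i,j+1}-p_{ij}|\leq\delta_0$, starting from a ball near $\partial B_{2M}$ that $\nabla u(B_1)\subset B_M$ already avoids. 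Finitely many such lines and a universal number of steps exhaust $B_{2M}\setminus\mathcal{N}_{2\rho_0}(D_F)$. Your termination criterion (intersecting sub-level sets over all separating directions and all $P_i$) has no analogue of this propagation and cannot produce the required non-convex target.
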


\noindent In particular, since $\nabla u$ is (qualitatively) continuous the set $\nabla u(B_{r_0})$ is connected, so it is contained in a ball with at most one point of $D_F$, and is a distance at least $\rho_0$ from the remaining points in $D_F$. 

The idea is to localize the gradients using the level sets of non-convex functions of $\nabla u$.
Let $\epsilon_1,\,\mu_1 > 0$ be the (universal) constants from Lemma \ref{Linearization} with $\rho = \rho_0$. We assume by taking $\epsilon_1$ smaller
if necessary that $\epsilon_1 \leq \rho_0$. Let $\lambda_1$ be the constant from Lemma \ref{NonConvex} with $\rho = \rho_0$.
Finally, let $\nu_1$ be the constant from Proposition \ref{WeakHarnack} corresponding to $\mu = \mu_1$ and the ellipticity constants of $D^2F$ in $B_{2M} \backslash \mathcal{N}_{\rho_0}(D_F)$. The following lemma says that when the gradient image is sufficiently close to $\{q = 0\}$, we can ``chop" at its projection to $\{q = 0\}$ with circles (see Figure \ref{ChoppingPic}):

\begin{lem}\label{Chopping2}
Let $(p_0,\,0) \in B_{2M} \backslash \mathcal{N}_{2\rho_0}(D_F)$. There exist $\sigma_0,\,\delta_0 > 0$ universal such that if
$\nabla u(B_r) \subset \{|q| < \sigma_0\} \cap \{|p-p_0| \geq \epsilon_1/4\}$ and 
$$\frac{|\{\nabla u \in B_{\epsilon_1}(p_0,\,0)\} \cap B_r|}{|B_r|} < 1 - \mu_1,$$
then
$$\nabla u(B_{r/2}) \subset \{|p - p_0| \geq \epsilon_1/4 + \delta_0\}.$$
\end{lem}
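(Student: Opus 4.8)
\emph{Plan of proof.} After replacing $u$ by $r^{-1}u(rx)$ — which preserves (\ref{EL}) and all the hypotheses — I may assume $r = 1$. The idea is to produce a single slightly non‑convex function $\eta$ on $\mathbb{R}^n$, smooth near $\nabla u(B_1)$, so that $\eta_+(\nabla u)$ is a nonnegative subsolution of $L_F$ to which Proposition~\ref{WeakHarnack} applies with $\mu = \mu_1$, and whose positive set carries out the circular chop at the cylinder $\{|p - p_0| = \epsilon_1/4\}$ while meeting $\nabla u(B_1)$ only inside $B_{\epsilon_1}(p_0, 0)$.

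I would take $\eta(p, q) = g(|p - p_0|) + \frac{1}{2}c_0 |q|^2$ with $c_0 := 4/(\lambda_1 \epsilon_1)$, where $g$ is smooth with the following profile on $[\epsilon_1/4, \infty)$: $g(\epsilon_1/4) = \delta_1$, $g'(\epsilon_1/4) = -1$, $g'' \equiv c_0$ until $g$ reaches its minimum value $\delta_1 - 1/(2c_0) < 0$, after which $g$ is held (smoothly) constant equal to that value. Here $\delta_1 > 0$ is a small universal constant fixed at the end, and $\sigma_0$ is taken small. Since $g$ is decreasing on $[\epsilon_1/4, \infty)$, on $\{\eta > 0\}$ one has $g(t) > -\frac{1}{2}c_0|q|^2 > -\frac{1}{2}c_0\sigma_0^2$ (with $t := |p - p_0|$), which for $\delta_1, \sigma_0$ small confines $t$ to $[\epsilon_1/4,\, \epsilon_1/4 + 2\delta_1 + c_0\sigma_0^2) \subset [\epsilon_1/4, \epsilon_1/2)$ and keeps $g'(t) \in [-1, 0)$ there. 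Hence on $\{\eta > 0\}$ the eigenvalues of $D^2\eta$ are $g'(t)/t \in [-4/\epsilon_1, 0)$ (in the single angular direction about the axis $\{p = p_0\}$), $g''(t) = c_0$ (radial), and $c_0$ (the $q$‑directions); so $\gamma_2 = c_0$ and $\gamma_1 \ge -4/\epsilon_1 = -\lambda_1 c_0 = -\lambda_1 \gamma_2$, and Lemma~\ref{NonConvex} with $\rho = \rho_0$ gives $L_F(\eta_+(\nabla u)) \ge 0$. Its hypothesis $\nabla u(B_1) \cap \{\eta > 0\} \subset B_M \setminus \mathcal{N}_{\rho_0}(D_F)$ holds because this set lies in $\nabla u(B_1) \cap B_{\epsilon_1}(p_0, 0)$, and $(p_0, 0)$ is at distance $\ge 2\rho_0$ from $D_F$ with $\epsilon_1 \le \rho_0$; in particular the coefficients of $L_F$ are uniformly elliptic on $\{\eta_+(\nabla u) > 0\}$ with the constants used to define $\nu_1$ (and may be replaced by $\delta_{ij}$ where $\eta_+(\nabla u) = 0$).

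The rest is bookkeeping. The inclusion $\{\eta > 0\} \cap \nabla u(B_1) \subset B_{\epsilon_1}(p_0, 0)$ turns the density hypothesis into $|\{\eta_+(\nabla u) > 0\} \cap B_1| < (1 - \mu_1)|B_1|$, so Proposition~\ref{WeakHarnack} gives $\sup_{B_{1/2}} \eta_+(\nabla u) \le (1 - \nu_1) \sup_{B_1} \eta_+(\nabla u)$. As $g$ decreases on $[\epsilon_1/4, \infty)$ and $\nabla u(B_1) \subset \{|p - p_0| \ge \epsilon_1/4\}$, we have $\sup_{B_1} \eta_+(\nabla u) \le \delta_1 + \frac{1}{2}c_0\sigma_0^2$, so taking $\sigma_0$ so small that $\frac{1}{2}c_0\sigma_0^2 \le \frac{\nu_1}{2}\delta_1$ gives $\eta(\nabla u) \le (1 - \frac{\nu_1}{2})\delta_1$ on $B_{1/2}$. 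For $x \in B_{1/2}$ with $\nabla u(x) = (p, q)$ and $t = |p - p_0| \ge \epsilon_1/4$, this reads $g(t) \le (1 - \frac{\nu_1}{2})\delta_1$; if $t \ge \epsilon_1/4 + 1/c_0$ (the minimizer of $g$) this already exceeds $\epsilon_1/4 + \frac{\nu_1}{2}\delta_1$ since $1/c_0 = \lambda_1\epsilon_1/4$ and $\delta_1$ is small, while otherwise $\delta_1 - (t - \epsilon_1/4) + \frac{c_0}{2}(t - \epsilon_1/4)^2 \le (1 - \frac{\nu_1}{2})\delta_1$ forces $t - \epsilon_1/4 \ge \frac{\nu_1}{2}\delta_1$. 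Thus $\nabla u(B_{1/2}) \subset \{|p - p_0| \ge \epsilon_1/4 + \delta_0\}$ with $\delta_0 := \frac{\nu_1}{2}\delta_1$, and both $\sigma_0$ and $\delta_0$ are universal.

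The step I expect to be the main obstacle is the construction of $\eta$. The ``only one slightly negative eigenvalue'' budget of Lemma~\ref{NonConvex} forces the convex corrections in the radial and $q$ variables to be of size $\sim (\lambda_1\epsilon_1)^{-1}$, so the naive choice $\epsilon_1/4 + \delta_1 - |p - p_0| + \frac{c_0}{2}(|p - p_0| - \epsilon_1/4)^2$ would bend back up and become positive again at a scale $\sim \lambda_1\epsilon_1$ still inside $B_M$, wrecking the measure estimate. It is exactly the truncation of $g$ at its minimum — where $\eta \le 0$ and Lemma~\ref{NonConvex} imposes nothing — together with the thinness of the slab $\{|q| < \sigma_0\}$ (so the $q$‑term enlarges $\{\eta > 0\}$ only by $O(\sigma_0^2)$) that confines $\{\eta > 0\} \cap \nabla u(B_1)$ to $B_{\epsilon_1}(p_0, 0)$; making all of these choices mutually consistent is the delicate point.
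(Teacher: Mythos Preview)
Your proof is correct and follows the same strategy as the paper: build a function $\eta$ of the gradient variable with exactly one slightly negative Hessian eigenvalue, invoke Lemma~\ref{NonConvex} to make $\eta_+(\nabla u)$ a subsolution of $L_F$, and then apply Proposition~\ref{WeakHarnack} with $\mu=\mu_1$ using that $\{\eta>0\}\cap\nabla u(B_1)\subset B_{\epsilon_1}(p_0,0)$.

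The only real difference is the choice of barrier. The paper takes the exponential
\[
\eta_{A,p_0}(p,q)=e^{A^2|q|^2/2-A|p-p_0|}-e^{-A\epsilon_1/2},
\]
whose Hessian in suitable coordinates satisfies $(A^2\eta_A)^{-1}D^2\eta_A=\mathrm{diag}\bigl(-(A|p-p_0|)^{-1},1,\dots,1\bigr)+O(A^{-2})$; the exponential decay automatically confines $\{\eta_{A,p_0}>0\}$ to $\{|p-p_0|<\epsilon_1/2+A^{-5}\}$ without any truncation, so one simply takes $A$ large and then $\sigma_0<A^{-3}$. Your barrier $g(|p-p_0|)+\tfrac{c_0}{2}|q|^2$ with $g$ quadratic achieves the same eigenvalue ratio $-\lambda_1$ by taking $c_0=4/(\lambda_1\epsilon_1)$, but a pure quadratic would bend back up and re‑enter $\{\eta>0\}$ far from $(p_0,0)$; you correctly isolate this as the main obstacle and repair it by freezing $g$ at its (negative) minimum, so that the smoothing region sits entirely in $\{\eta\le 0\}$ where Lemma~\ref{NonConvex} imposes no eigenvalue constraint. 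The paper's exponential buys a one‑line construction with no truncation or smoothing; your polynomial is more hands‑on but makes the mechanism (convex in $n-1$ directions to overwhelm the single angular concavity, then cut off to localize the positive set) completely explicit. Both yield universal $\sigma_0,\delta_0$ with the same dependencies.
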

\begin{proof}
We may assume that $r = 1$ after a Lipschitz rescaling. Define 
$$\eta_A(p,\,q) := e^{A^2|q|^2/2 - A|p|}, \quad \eta_{A,\,p_0} := \eta_A(p - p_0,\,q) - e^{-A\epsilon_1/2}.$$
In an appropriate system of coordinates we have in $\{|q| < A^{-3}\}$ that
$$(A^2\eta_A)^{-1}D^2\eta_A = \text{diag}(-(A|p|)^{-1},\,1,\,...,\,1) + O(A^{-2}),$$
and that $\{\eta_A > e^{-A\epsilon_1/2}\} \subset \{|p| < \epsilon_1/2 + A^{-5}\}$. Then by our first hypothesis, for $A$ large universal and $\sigma_0 < A^{-3}$
we have $\nabla u(B_1) \cap \{\eta_{A,\,p_0} > 0\} \subset B_M \cap B_{\epsilon_1}(p_0,\,0) \subset B_M \backslash \mathcal{N}_{\rho_0}(D_F)$, and
that the eigenvalues $\gamma_1 \leq ... \leq \gamma_n$ of $D^2\eta_{A,\,p_0}$ satisfy $\gamma_2 > 0$ and $\gamma_1 > -\lambda_1\gamma_2$
in $\nabla u(B_1) \cap \{\eta_{A,\,p_0} > 0\}$. We conclude using Lemma \ref{NonConvex} that the function $v_{p_0} := (\eta_{A,\,p_0})_+(\nabla u)$
satisfies $L_F(v_{p_0}) \geq 0$. By our second hypothesis we can apply Proposition \ref{WeakHarnack} to $v_{p_0}$. 
In the extreme case that $\sigma_0 = 0$, Proposition \ref{WeakHarnack} gives
$$\nabla u(B_{1/2}) \subset \{\eta_{A,\,p_0} < (1-\nu_1)(e^{-A\epsilon_1/4} - e^{-A\epsilon_1/2})\} \subset \{|p - p_0| \geq \epsilon_1/4 + 2\delta_0\}$$
for some $\delta_0 > 0$ universal. By continuity we have the same inclusion with $2\delta_0$ replaced by $\delta_0$ for sufficiently small
$\sigma_0 < A^{-3}$, completing the proof.
\end{proof}

\begin{figure}
 \centering
    \includegraphics[scale=0.4]{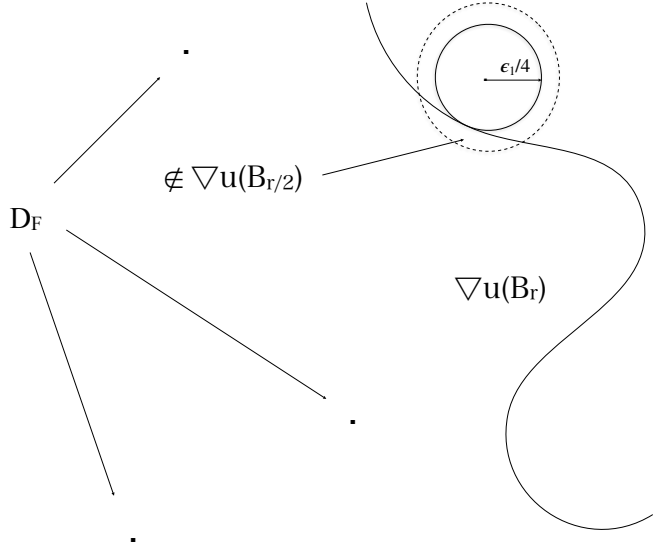}
 \caption{If $\nabla u(B_r)$ is ``nearly $2D$" we can chop its projection to $\{q = 0\}$ with circles. The picture illustrates the case $n = 2$.}
 \label{ChoppingPic}
\end{figure}

We can now prove Proposition \ref{Connected}.
\begin{proof}[{\bf Proof of Proposition \ref{Connected}:}]
Take $\sigma_0$ as in Lemma \ref{Chopping2}, and apply the following algorithm for $k \geq 0$: If 
$$\frac{|\{\nabla u \in B_{\epsilon_1}(p_0,\,0)\} \cap B_{2^{-k}}|}{|B_{2^{-k}}|} \geq 1-\mu_1$$
for some $(p_0,\,0) \in B_{2M} \backslash \mathcal{N}_{2\rho_0}(D_F)$, then stop. We have by Lemma \ref{Linearization} that  $\nabla u(B_{2^{-k-1}}) \subset B_{\rho_0}(p_0,\,0)$. If not, apply Lemma \ref{Chopping2} to conclude that
$$\nabla u(B_{2^{-k-1}}) \subset \{|p-p_0| \geq \epsilon_1/4 + \delta_0\}$$
for all $(p_0,\,0) \in B_{2M} \backslash \mathcal{N}_{2\rho_0}(D_F)$ such that $\nabla u(B_{2^{-k}}) \subset \{|p - p_0| \geq \epsilon_1/4\}$.
If at this point we can conclude that $\nabla u(B_{2^{-k-1}}) \subset \mathcal{N}_{3\rho_0}(D_F)$, then stop.

To show that this algorithm terminates after a universal number of steps, we use a simple covering argument. Let $\mathcal{S}_k$ be the projections of the sets $\nabla u(B_{2^{-k}})$ to $\{q = 0\}$. Take a finite number of lines $\{l_i\}$ in $\{q = 0\}$ that avoid $\mathcal{N}_{2\rho_0}(D_F)$, whose $\epsilon_1/4$ neighborhoods cover $B_{2M} \backslash \mathcal{N}_{2\rho_0}(D_F) \cap \{q = 0\}$. For each $l_i$ take a universal number $J+1$ of two-dimensional balls $\{B_{ij} := B_{\epsilon_1/4}(p_{ij})\}_{j = 1}^{J+1}$ in $\{q = 0\}$ that cover $B_{2M} \cap l_i \cap \{q = 0\}$, with centers $p_{ij} \in B_{2M} \cap l_i \cap \{q = 0\}$ on $l_i$ such that $|p_{i,\,j+1} - p_{ij}| \leq \delta_0$ for $j =1,\,...,\,J$. Since $\mathcal{S}_0 \subset B_M$ we can arrange that $B_{i1} \cap \mathcal{S}_0 = \emptyset$ for all $i$. By induction, if the algorithm doesn't terminate after $k$ steps, 
then $\mathcal{S}_{k}$ has empty intersection with the convex hull of $B_{i1}$ and $B_{i,\,k+1}$, for each $i$. In particular, after $J$ steps we have that $\mathcal{S}_{J} \subset\mathcal{N}_{2\rho_0}(D_F)$, and the proof is complete up to replacing $\sigma_0$ with $\min\{\sigma_0,\, \rho_0\}$.
\end{proof}

\subsection{Proof of Theorem \ref{C1}}
We are now in position to prove Theorem \ref{C1}. We call constants depending on $F,\,M,\,n$ universal.
\begin{proof}
For any $\epsilon > 0$ we will show that there is some $\delta(\epsilon,\,F,\,M,\,n) > 0$ such that $\nabla u(B_{\delta})$ is contained in a ball of radius $\epsilon$.

Take $\sigma_0$ to be the constant from Proposition \ref{Connected}. Applying Proposition \ref{ConvexLocalization} with $\rho = \sigma_0/4$ we obtain
$s_0 > 0$ universal such that either $\nabla u(B_{s_0}) \subset B_{\sigma_0/4}(p)$
for some $p \in B_{2M} \backslash \mathcal{N}_{\sigma_0/2}(D_F)$, or $\nabla u(B_{s_0}) \subset \mathcal{N}_{\sigma_0}(K_F)$.
In the latter case, apply Proposition \ref{Connected} to $s_0^{-1}u(s_0x)$ to conclude for some $r_0 > 0$ universal that either $\nabla u(B_{r_0s_0}) \subset B_{\rho_0}(p)$ for some $p \in B_{2M} \backslash \mathcal{N}_{2\rho_0}(D_F)$ or $\nabla u(B_{r_0s_0}) \subset \mathcal{N}_{3\rho_0}(D_F)$.

In all cases, $\nabla u(B_{r_0s_0})$ is contained in a ball $\mathcal{B}$ that has at most one point of $D_F$ and is a positive universal distance from the remaining
points of $D_F$. Thus, after restricting our attention to $\tilde{u} = (r_0s_0)^{-1}u(r_0s_0 x)$ we may assume that
$D_F$ contains at most one point (indeed, we can modify $F$ outside of $\mathcal{B}$ without changing that $\tilde{u}$ is a minimizer). Applying Proposition \ref{ConvexLocalization} to $\tilde{u}$ with $\rho = \epsilon / 4$ completes the proof.
\end{proof}
\section{Proof of Theorem \ref{Counterex}}\label{CounterexProof}

In this section we construct the examples from Theorem \ref{Counterex}.
Here and below we let $k \geq 1$, and $q = (q_1,\,q_2),\, y = (y_1,\,y_2) \in \mathbb{R}^{2k+2}$ with $q_i,\,y_i \in \mathbb{R}^{k+1},\, i = 1,\,2.$
We will reduce the problem to making a certain one-dimensional construction using the symmetries of $w$.


\subsection{Reduction to Two Dimensions}
We first reduce Theorem \ref{Counterex} to a problem in two dimensions. Let $v$ be the one-homogeneous function on $\mathbb{R}^2$ given by
$$v(x_1,\,x_2) := \frac{1}{\sqrt{2}} \frac{x_2^2 - x_1^2}{|x|}.$$
We claim it suffices to construct a $C^1$, uniformly convex function $G(p_1,\,p_2)$ on $\mathbb{R}^2$ that is smooth away from $D_{G} = \sqrt{2}\,S^1 \cap \{p_1^2 = p_2^2\}$, 
such that $G$ is invariant under reflection over the axes and over the lines $\{p_1 = \pm p_2\}$ (that is, $G(p_1,\,p_2) = G(-p_1,\,p_2) = G(p_2,\,p_1)$) and furthermore
\begin{equation}\label{ELReduction1}
\text{tr}(D^2G(\nabla v)\,D^2v) + k\,\nabla G(\nabla v) \cdot \left(\frac{1}{x_1},\,\frac{1}{x_2}\right) = 0
\end{equation}
for $x$ in the positive quadrant. Indeed, if we manage to do this, note that by the symmetries of $G$ and $v$, each term on the left is smooth away from $\{x_1^2 = x_2^2\}$, where $\nabla v$ maps to $D_G$.
If we then take $F(q) = G(|q_1|,\,|q_2|)$ we obtain a $C^1$, uniformly convex function on $\mathbb{R}^{2k+2}$ that is smooth away from $D_F = \sqrt{2}\,S^{2k+1} \cap \{|q_1|^2 = |q_2|^2\}$. Using that $w(y) = v(|y_1|,\,|y_2|)$ we compute
\begin{align*}
\text{tr}(D^2F(\nabla w)\,D^2w)(y) &=  \text{tr}(D^2G(\nabla v)\,D^2v)(|y_1|,\,|y_2|) \\
&+ k\,\nabla G(\nabla v(|y_1|,\,|y_2|) )\cdot \left(\frac{1}{|y_1|},\,\frac{1}{|y_2|}\right) \\
&= 0
\end{align*}
classically away from the cone $\{|y_1|^2 = |y_2|^2\}$. Here we used that $v_1 < 0$ and $v_2 > 0$ in the positive quadrant. It is not hard to show that the equation $\text{div}(\nabla F(\nabla w)) = 0$ 
holds in the weak sense in $B_1$ by integrating away from a thin cone containing $\{|y_1|^2 = |y_2|^2\}$ and a small ball around the origin, using the $C^1$ regularity of $F$ and the one-homogeneity of $w$,
and taking a limit.


\subsection{Reduction to One Dimension}
We now use that $\Sigma_v := \nabla v(S^{1})$ is one-dimensional and an extension lemma to reduce our problem to one dimension.
The set $\nabla v(S^1 \cap \{x_2 \geq |x_1|\})$ can be written as a graph $\Gamma_1 := \{(p_1,\, \varphi(p_1))\}$ with $p_1 \in [-1,\,1]$, where $\varphi \in C^{\infty}(-1,\,1) \cap C^1([-1,\,1])$ is 
even, uniformly convex, and separates from the lines $p_2 = \pm p_1$ like $\text{dist.}^{3/2}$ at the endpoints. See the Appendix for a justification of these properties, as well
as an expansion of $\varphi$ near the endpoints. The set $\Sigma_v$ consists
of four rotations of $\Gamma_1$ by $\pi/2$ (see Figure \ref{Fig2}). Let $S := \sqrt{2}\,S^1 \cap \{p_1^2 = p_2^2\}$, and let $\Sigma_0 := \Sigma_v \backslash S,\, \Gamma_0 := \Gamma_1 \backslash S$. 
We will use the following important extension lemma:

\begin{figure}
 \centering
    \includegraphics[scale=0.35]{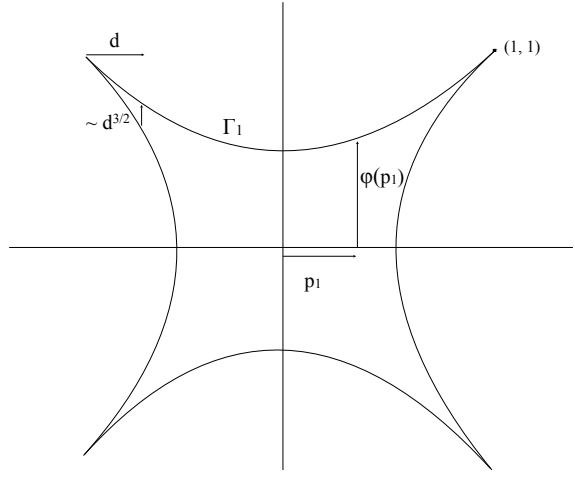}
 \caption{The set $\Sigma_v$ consists of four congruent curves separating from the lines $p_2 = \pm p_1$ like $\text{dist}^{3/2}.$}
 \label{Fig2}
\end{figure}

\begin{lem}\label{ExtensionLemma}
Assume that $g : \Sigma_v \rightarrow \mathbb{R}$ and ${\bf v} : \Sigma_v \rightarrow \mathbb{R}^2$ are smooth on $\Sigma_0$ and continuous on $\Sigma_v$, and satisfy the condition
\begin{equation}\label{tangsep1}
g(\tilde{p}) - g(p) - {\bf v}(p)\cdot (\tilde{p} - p) \geq \gamma |\tilde{p} - p|^2
\end{equation}
for some $\gamma > 0$ and all $\tilde{p},\, p \in \Sigma_v$. Then there exists a $C^1$, uniformly convex function $G$ on $\mathbb{R}^2$ with $G \in C^{\infty}(\mathbb{R}^2 \backslash S)$, such that 
$G = g$ and $\nabla G = {\bf v}$ on $\Sigma_v$.
\end{lem}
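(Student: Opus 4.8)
The plan is to build $G$ by a two-step procedure: first define a candidate convex function as the supremum of the ``tangent planes'' prescribed by the data $(g,\mathbf{v})$ along $\Sigma_v$, then mollify and correct to achieve smoothness off $S$ and uniform convexity everywhere, while preserving the prescribed $0$-jet and $1$-jet on $\Sigma_v$. Concretely, I would set
\begin{equation*}
G_0(p) := \sup_{p'\in\Sigma_v}\left(g(p') + \mathbf{v}(p')\cdot(p-p')\right).
\end{equation*}
Condition \eqref{tangsep1} guarantees two things: each supporting plane touches $G_0$ exactly at its base point $p'$ (so $G_0(p') = g(p')$ and, by the quadratic gap, $\nabla G_0(p') = \mathbf{v}(p')$ at points where $G_0$ is differentiable), and $G_0$ lies above the paraboloid $g(p) + \mathbf{v}(p)\cdot(\cdot - p) + \gamma|\cdot - p|^2$ pinned at each $p\in\Sigma_v$, hence $G_0 - \gamma|p|^2$ is still convex near $\Sigma_v$ — this is the seed of uniform convexity. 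Since $\Sigma_v$ is compact and $g,\mathbf{v}$ are continuous, $G_0$ is finite, convex, and locally Lipschitz; I would also check it has, say, quadratic growth at infinity so that later global modifications are harmless.

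Second, I would upgrade $G_0$ to the desired $G$. Away from $S$, the curve $\Sigma_0$ is a smooth uniformly convex arc and $(g,\mathbf{v})$ are smooth there with $\mathbf{v} = \nabla g$ compatible along the arc (differentiating \eqref{tangsep1} in the tangential direction), so near $\Sigma_0$ one can write down an explicit smooth uniformly convex function realizing the $1$-jet — e.g. using the arc as a ``ridge'' and adding a transverse quadratic — and patch it to $G_0$ with a partition of unity subordinate to a cover of $\mathbb{R}^2$ by (i) a neighborhood of $\Sigma_0$, (ii) the complement of a neighborhood of $\Sigma_v$, and (iii) small neighborhoods of the four points of $S$. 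On region (ii), $G_0$ is already finitely many linear functions' supremum over a positive distance from its contact set, and I can mollify it freely; the partition-of-unity sum of convex functions is not convex, but because each piece agrees with the common convex function $G_0$ to high order where the cutoffs transition, adding a large multiple of $|p|^2$ (or rather using that $G_0 - \gamma|p|^2$ is convex to absorb the error) restores uniform convexity. The only subtlety is that near the points of $S$ we merely demand $G\in C^1$ and match the continuous data; there $G_0$ itself, possibly with a small convex smoothing that is $C^1$ but not $C^2$ across $S$, is used directly — the $\mathrm{dist}^{3/2}$ separation of $\Gamma_1$ from the lines $p_2=\pm p_1$ recorded in the Appendix is exactly what makes $G_0$ fail to be $C^2$ precisely on $S$ and no worse, matching $D_G = S$.

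The main obstacle I expect is reconciling three competing demands at once: (a) $G = g$ and $\nabla G = \mathbf{v}$ \emph{exactly} on all of $\Sigma_v$ (including the corner points $S$), (b) $G\in C^\infty(\mathbb{R}^2\setminus S)$ with genuine degeneracy — i.e. $D^2G$ blowing up or vanishing — developing only at $S$, and (c) uniform convexity of $G$ on all of $\mathbb{R}^2$. Demands (a) and (b) pull against (c) because mollification and partition-of-unity gluing destroy both exact interpolation and convexity; the resolution is the quantitative slack built in by the strict inequality \eqref{tangsep1}, which I would track carefully: it provides a uniform quadratic margin that survives the gluing and can be spent to restore convexity, provided the corrections are made at a scale small compared to $\gamma$ and to the geometry of $\Sigma_v$. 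A secondary technical point is verifying that $G_0$ does not have unwanted flat faces or ridges away from $\Sigma_v$ — this follows because \eqref{tangsep1} forces the contact set of each supporting plane to be a single point of $\Sigma_v$, so $G_0$ is in fact strictly convex, and its only non-smoothness is inherited from the non-smoothness of $\Sigma_v$ at $S$.
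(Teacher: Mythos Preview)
Your proposal has a genuine gap centered on the role of the supremum function $G_0$. The envelope of a \emph{one-parameter} family of affine functions indexed by a curve is not strictly convex: where $G_0$ is differentiable, $\nabla G_0(q) = \mathbf{v}(p'(q))$ for the maximizer $p'(q)$, so the gradient image lies in the one-dimensional set $\mathbf{v}(\Sigma_v)$ and $D^2G_0$ has rank at most one. In particular $G_0 - \gamma|p|^2$ is \emph{not} convex near $\Sigma_v$, and condition \eqref{tangsep1} gives you no slack in the direction normal to $\Sigma_v$ --- it is a purely tangential separation condition. Thus the ``quantitative margin'' you plan to spend after gluing simply is not there in the normal direction, and your strict-convexity argument for $G_0$ fails. (Relatedly, on region (ii) $G_0$ is not a supremum of finitely many affine functions; $\Sigma_v$ is a curve.)

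You do mention ``adding a transverse quadratic,'' which is the right instinct, but it is not a small correction to an already-good $G_0$: it is the essential source of normal convexity, and its coefficient must be allowed to blow up as one approaches $S$. The paper builds the local extension directly in this form, $g\circ\pi_{\Sigma_v} + (\mathbf{v}\circ\pi_{\Sigma_v})\cdot(x - \pi_{\Sigma_v}(x)) + A(\pi_{\Sigma_v}(x))\,d_{\Sigma_v}^2$, with $A$ chosen large pointwise on $\Sigma_0$ so that $D^2G_0 \geq \tfrac{3}{2}\gamma I$ there. It then goes global not by partition of unity but by invoking a Whitney extension theorem for $C^1$ convex functions due to Azagra--Mudarra, applied to $G_0 - \tfrac{\gamma}{4}|x|^2$ on a compact set containing $\Sigma_v$; this cleanly separates the local construction (where normal convexity is inserted by hand) from the global one (where convexity is delivered by the black box), and sidesteps the gluing-versus-convexity tension you anticipate. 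A final mollification over a Whitney cover of $\mathbb{R}^2 \setminus \Sigma_v$ then yields smoothness off $S$ without disturbing the data on $\Sigma_v$.
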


\noindent We delay the proof of Lemma \ref{ExtensionLemma} to the Appendix, and proceed with the reduction.
We claim that it suffices to construct even functions $f,\,h \in C^{\infty}(-1,\,1) \cap C^1([-1,\,1])$ such that 
\begin{equation}\label{ELReduction2}
h = \frac{1}{1+2k}\left(\frac{f''}{\varphi''} + k\frac{f'}{\varphi'}\right)
\end{equation}
on $(-1,\,1)$, and furthermore the pair $(g,\,{\bf v})$ defined on $\Sigma_v$ by
\begin{equation}\label{TangentData}
g(s,\,\varphi(s)) := f(s), \quad {\bf v}(s,\,\varphi(s)) := (f'(s) - h(s)\varphi'(s),\, h(s))
\end{equation}
on $\Gamma_1$ and extended by reflection over the lines $p_2 = \pm p_1$, satisfies the conditions of Lemma \ref{ExtensionLemma} for some $\gamma > 0$.
Indeed, if this is accomplished, then the extension $G$ from Lemma \ref{ExtensionLemma} satisfies the Euler-Lagrange equation (\ref{ELReduction1}).
To see this, let $\nu$ be the upward unit normal to $\Gamma_0$. By the one-homogeneity of $v$ we have
$$x = \nu(\nabla v(x)) = \frac{(-\varphi',\,1)}{\sqrt{1 + \varphi'^2}}$$ 
for $x \in S^1 \cap \{x_2 > |x_1|\}$. Differentiating in $x$ gives 
$$D^2v(x) = \frac{1}{\kappa}\tau \otimes \tau = -\frac{(1+ \varphi'^2)^{3/2}}{\varphi''}\tau \otimes \tau$$ 
where $\tau$ is the unit tangent vector to $\Gamma_0$ at $\nabla v(x)$ (also the unit tangent to $S^1$ at $x$) and $\kappa$ is the (signed) curvature of $\Gamma_0$. 
Finally, by (\ref{TangentData}) we have
\begin{equation}\label{Restrictions}
G(s,\,\varphi(s)) = f(s), \quad G_2(s,\,\varphi(s)) = h(s)
\end{equation}
on $\Gamma_0$. Differentiating the first relation in (\ref{Restrictions}) twice we obtain
$$\nabla G = (f' - h\varphi',\,h), \quad \tau^T \cdot D^2G \cdot \tau = \frac{f''-h\varphi''}{1 + \varphi'^2}$$
on $\Gamma_0$. Putting these together gives the equivalence of (\ref{ELReduction1}) and (\ref{ELReduction2}).
Finally, since $(G,\, \nabla G)$ has the desired symmetries on $\Sigma_v$, we can arrange that $G$ has the desired symmetries globally by taking
the average of its reflections.

\begin{rem}
One can compute the Euler-Lagrange equation (\ref{ELReduction2}) directly in $\mathbb{R}^{2k+2}$ using the geometry of $\Sigma_w := \nabla w(S^{2k+1})$, without much trouble. 
Using the one-homogeneity of $w$ we see as above that the equation reduces to $\text{tr}((II)^{-1}(q)\,D_T^2F(q)) = 0$ on $\Sigma_w$, where $II$ is the second fundamental form of $\Sigma_w$ and $T$ is the tangent hyperplane to $\Sigma_w$. Since $\Sigma_w$ is obtained by taking rotations of $\Gamma_1$, it is tangent on one side to second order to a sphere of radius $\varphi\sqrt{1 + \varphi'^2}$ and on the other side to second order to a sphere of radius $\frac{s}{\varphi'}\sqrt{1 + \varphi'^2}$. It thus has one principal curvature $-\varphi''\,(1+ \varphi'^2)^{-3/2}$, $k$ principal curvatures $\frac{1}{\varphi\,\sqrt{1 + \varphi'^2}}$ from rotating around the $p_1$ axis, and $k$ principal curvatures $-\frac{\varphi'}{s\sqrt{1 + \varphi'^2}}$ from rotating around the $p_2$ axis. The eigenvalues of $D^2F$ corresponding to these directions are $\frac{f'' - h\varphi''}{1 + \varphi'^2},\, \frac{h}{\varphi}$ and $\frac{f'-h\varphi'}{s}$, where the latter two come from rotations. Putting these together in the original Euler-Lagrange equation $\text{tr}((II)^{-1}(q)\,D_T^2F(q)) = 0$ we recover (\ref{ELReduction2}).
\end{rem}

\subsection{The One-Dimensional Construction} 

We now construct $f$ and $h$. The Euler-Lagrange equation (\ref{ELReduction2}) determines $h$ through our choice of $f$, so there is only one function to construct.
It is convenient to do this by taking
\begin{equation}\label{EtaDef}
f''(s) := \eta(s)\varphi''(s), \, f'(0) = 0
\end{equation}
for some positive $\eta \in C^{\infty}(-1,\,1) \cap C([-1,\,1])$. Fix $0 < \delta << 1$. We choose $\eta$ satisfying the following conditions:
\begin{enumerate}
\item[(i)] $\eta$ is even, concave, and $\eta(1) = 1/2$,
\item[(ii)] $\eta \geq \min\left\{1,\, \frac{1}{2}(1 + \delta^{-1/2}(1-s)^{1/2})\right\}$,
\item[(iii)] $\eta \equiv 1 + \mu$ on $[0,\,1-\delta]$ for some $\mu > 0$,
\item[(iv)] $\int_0^1 \eta(s)\varphi''(s)\,ds = 1$.
\end{enumerate}
Since $\int_0^1 \varphi''(s)\,ds = 1$, it is clear that $\mu \rightarrow 0$ as we take $\delta \rightarrow 0$. We will show below that for any such choice of $\eta$ with $\delta$ sufficiently small, the pair $(g,\,{\bf v})$
defined by (\ref{TangentData}) satisfies the hypotheses of Lemma \ref{ExtensionLemma}.

\vspace{2mm}

{\bf Continuity Condition:} The condition that ${\bf v}$ is continuous on $\Sigma_v$ and invariant under reflection over the diagonal is that
\begin{equation}\label{C1Condition}
(f' - h\varphi',\, h)(1) \text{ is parallel to } (1,\,1).
\end{equation}
This follows from (\ref{ELReduction2}), using that $\eta(1) = 1/2$ and $f'(1) = \varphi'(1) = 1$.

\vspace{2mm}

{\bf Convexity Condition Along Top Graph}. 
We check the convexity condition (\ref{tangsep1}) on $\Gamma_1 \cap \{p_1 \geq 0\}$. Take $\tilde{p} = (y,\,\varphi(y))$ and $p = (x,\,\varphi(x))$ for some $x,\,y \in [0,\,1]$. The quantity of interest is
\begin{align*}
g(y,\,\varphi(y)) &- g(x,\,\varphi(x)) - {\bf v}(x,\,\varphi(x)) \cdot (y-x,\,\varphi(y)-\varphi(x))  \\
&= [f(y) - f(x) - f'(x)(y-x)] - h(x) [\varphi(y) - \varphi(x) - \varphi'(x) (y-x)] \\
&= \int_x^y f''(x)(y-s)\,ds - h(x) \int_x^y \varphi''(s)(y-s)\,ds \\
&= \left(\int_x^y \varphi''(s)(y-s)\,ds \right)H(x,\,y),
\end{align*}
where
$$H(x,\,y) = \int_x^y (\eta-1/2)(s)d\mu_y(s) - \frac{1}{1+2k}(\eta - 1/2)(x) - \frac{k}{1+2k}\frac{f' - \varphi'}{\varphi'}(x),$$
and $d\mu_y$ is the probability density
$$d\mu_y := \frac{\varphi''(s)(y-s)\,ds}{\int_x^y \varphi''(t)(y-t)\,dt}$$
on the interval from $x$ to $y$. We claim that
\begin{equation}\label{HBound}
H(x,\,y) \geq c_0\,\max\{(1-x)^{1/2},\, (1-y)^{1/2}\}
\end{equation}
for some $c_0 > 0$ independent of $x,\,y \in [0,\,1]$. The desired inequality (\ref{tangsep1}) then follows because 
$$\frac{1}{(y-x)^2} \int_x^y \varphi''(s)(y-s)\,ds \geq c\, \min\{(1-x)^{-1/2},\, (1-y)^{-1/2}\}$$
for some $c > 0$, using that $\varphi''$ is positive, increasing on $[0,\,1)$, and has the expansion $\varphi''(s) = \sqrt{\frac{2}{3}}(1-s)^{-1/2} + O(1)$ near $s = 1$ (see Appendix). 
To show (\ref{HBound}) we check several cases. Below, $c_1,\,c_2$ always denote universal positive constants which may change from line to line.

In the case that $x,\,y \leq 1-\delta$ it is obvious that $H(x,\,y) \geq c_0 > 0$ since $\eta-1/2 > 1/2$ is constant on $[0,\,1-\delta]$ and $f'-\varphi' = \mu \varphi'$ with $\mu$ small.

The next simplest case is that $y < x$ and $x > 1-\delta$. Using that $\eta$ is decreasing, that $f' \leq 1$, and the expansion of $\varphi'(s)$ near $s = 1$ (see Appendix) we have
$$H(x,\,y) \geq \frac{2}{3} \int_x^y (\eta - 1/2)(s)d\mu_y(s) - c_1\,(1-x)^{1/2}.$$
It is elementary to check that the mass of $d\mu_y$ in the left half of the interval $[y,\,x]$ is at least $c_2 > 0$ independent of $y$ using the properties of $\varphi$.
Since $\eta$ is decreasing and concave we conclude that
$$H(x,\,y) \geq \frac{1}{3}c_2\,(\eta - 1/2)(y) - c_1\,(1-x)^{1/2} \geq c_0(1-y)^{1/2}$$
using definition of $\eta$ and that $\delta$ is small.

The most delicate case is that $x < y$ and $y > 1-\delta$. By the expansion of $\varphi(s)$ near $s = 1$ we may choose $\delta$ so small
that $d\mu_y$ is decreasing on $[1-2\delta,\,y]$, independent of $y > 1-\delta$ (see Appendix). We first claim that
$$\int_x^y (\eta-1/2)(s)d\mu_y(s) \geq \frac{1}{2}(\eta - 1/2)(x).$$
If $x < 1-2\delta$ then since most of the weight of $d\mu_y$ on $[x,\,y]$ is to the left of $1-\delta$ the inequality is obvious. 
When $x \geq 1-2\delta$, since both the weight $d\mu_y$ and $\eta$ are decreasing on $[x,\,y]$, the average of $\eta - 1/2$ decreases if we redistribute the weight $d\mu_y$ evenly, and
the concavity of $\eta$ gives the result. We conclude that
$$H(x,\,y) \geq \frac{1}{6}(\eta - 1/2)(x) - \frac{f'-\varphi'}{2\varphi'}(x).$$
If $x \leq 1-\delta$ we have $f' - \varphi' = \mu \varphi'$ and $\eta - 1/2 > 1/2$ so $H(x,\,y) \geq c_0 > 0$. If $x > 1-\delta$ we argue as in the case $y < x$ that 
$$H(x,\,y) \geq \frac{1}{12} \delta^{-1/2}(1-x)^{1/2} - c_1(1-x)^{1/2} > (1-x)^{1/2}$$
for $\delta$ small, and inequality (\ref{HBound}) follows.

\vspace{2mm}

{\bf Global Convexity Condition.}
Finally, for the convexity condition (\ref{tangsep1}) to hold on all of $\Sigma_v$, it suffices by reflection symmetry to show that
\begin{equation}\label{Reflection}
{\bf v}(s,\,\varphi(s)) \cdot (1,\,0) \geq c_0 s, \quad {\bf v}(s,\,\varphi(s)) \cdot (-1,\,1) \geq c_0(\varphi(s) - s)
\end{equation}
for some fixed $c_0 > 0$ and all $s \in [0,\,1]$.

For $\delta$ small it is straightforward to show that $h < 3/4$, so $f' - h\varphi' \geq c_0\varphi'$, and the first inequality in (\ref{Reflection}) follows.

For the second we compute
\begin{align*}
(1+2k){\bf v} \cdot (-1,\,1) &= (1+2k)[(1 + \varphi')h - f'] \\
&= (1+\varphi')\eta + [k(1 + 1/\varphi') - (1+2k)]f' \\
&= (1+\varphi')\eta + [k(1/\varphi' - 1) - 1]f' \\
&\geq (1+\varphi')\eta - f'. 
\end{align*}
When $0 \leq s \leq 1-\delta$ this quantity is larger than $1$ by the definition of $\eta$. For $s \geq 1-\delta$ we use the expansion
$\varphi' = 1 - c_1\,(1-s)^{1/2} + O(1-s)$ (see Appendix) and that $f' \leq 1$ to get
\begin{align*}
(1+\varphi')\eta - f' &\geq (2-c_1(1-s)^{1/2})\eta - 1 + O(1-s) \\
&= (2\eta - 1) - c_1\eta(1-s)^{1/2} + O(1-s) \\
&\geq (\delta^{-1/2} - 2c_1)(1-s)^{1/2} + O(1-s) \\
&\geq \frac{1}{2}\delta^{-1/2}(1-s)^{1/2}
\end{align*}
for $\delta$ small. We conclude that
$${\bf v} \cdot (-1,\,1) \geq c_0 \min\{1,\, \delta^{-1/2}(1-s)^{1/2}\}.$$
Since $\varphi(s) - s \leq c_2(1-s)^{3/2}$ for some $c_2 > 0$, this confirms (\ref{Reflection}).


\subsection{Proof of Theorem \ref{Counterex}}
\begin{proof}
Choose $\eta$ as in the previous subsection. Then the pair $(g,\,{\bf v})$ determined by $\eta$ through the relations (\ref{EtaDef}), (\ref{ELReduction2}), and
(\ref{TangentData}) satisfies the hypotheses of Lemma \ref{ExtensionLemma}. We showed above that the extension $G$ then satisfies 
the Euler-Lagrange equation (\ref{ELReduction1}) and can be chosen symmetric over the axes and over the lines $p_1 = \pm p_2$, 
and that the result follows by taking $F(q) = G(|q_1|,\,|q_2|)$.
\end{proof}

\begin{rem}\label{FReg}
If we choose $\eta - 1/2$ to be a multiple of $(1-s)^{1/2}$ near $s = 1$, then a straightforward computation shows that the $2k + 1$ second derivatives of $F$ in directions tangent
to $\Sigma_w$ are bounded, and the only second derivative of $F$ that tends to $\infty$ is the one normal to $\Sigma_w$.
The regularity of $F$ near $D_F$ is in fact $C^{1,\,\frac{1}{3}}$, by the computation that confirms the second inequality in (\ref{Reflection}).
\end{rem}


\section{Appendix}\label{Appendix}
In the Appendix we record some properties of $\varphi$, and we prove the extension result Lemma \ref{ExtensionLemma}.

\subsection{Properties of $\varphi$}\label{Expansions}
We recall from \cite{MS} that if we parametrize $\Gamma_1$ by the angle $\theta \in [\pi/4,\,3\pi/4]$ of its upward unit normal $\nu$, then its curvature is given by
$\kappa = \frac{\sqrt{2}}{3}\sec(2\theta)$. It follows easily that $\varphi$ is smooth, even, and uniformly convex on $(-1,\,1)$, and $\varphi''$ is increasing on $[0,\,1)$. We recall also the expansion
$$\varphi\left(-1 + \frac{3}{2}\theta^2 + \theta^3 + O(\theta^4)\right) = 1-\frac{3}{2}\theta^2 + \theta^3 + O(\theta^4)$$
from \cite{MS}. By differentiating implicitly and using that $\varphi$ is even we obtain near $s = 1$ the expansions
$$\varphi'(s) = 1 - 2\sqrt{\frac{2}{3}}(1-s)^{1/2} + O(1-s)$$
$$\varphi''(s) = \sqrt{\frac{2}{3}}(1-s)^{-1/2} + O(1)$$
$$\varphi'''(s) = \frac{1}{2}\sqrt{\frac{2}{3}}(1-s)^{-3/2} + O((1-s)^{-1}).$$
In particular, for $0 < s < y$ the derivative of the weight $\varphi''(s)(y-s)$ is bounded above by $\varphi'''(s)(1-s) - \varphi''(s) = -\frac{1}{2}\sqrt{\frac{2}{3}}(1-s)^{-1/2} + O(1) < 0$ for $s$ close to $1$.


\subsection{Proof of Extension Lemma}

We now prove Lemma \ref{ExtensionLemma}. Our strategy is to first construct $G$ in a set containing a neighborhood of every point on $\Sigma_0$. We then apply a global $C^1$ extension result to
this local extension. To complete the construction we use a mollification and gluing procedure.

\subsubsection{Local Extension}

\begin{lem}\label{LocalExtension}
There exists an open set $\mathcal{O}$ containing a neighborhood of each point on $\Sigma_0$ and a function $G_0 \in C^{\infty}(\mathcal{O})$ such that 
$G_0 = g$ and $\nabla G_0 = {\bf v}$ on $\Sigma_0$, and furthermore
\begin{equation}\label{tangsep2}
G_0(\tilde{p}) - G_0(p) - \nabla G_0(p) \cdot (\tilde{p} - p) \geq \frac{\gamma}{2}|\tilde{p} - p|^2
\end{equation}
for all $p,\,\tilde{p} \in \mathcal{O}$.
\end{lem}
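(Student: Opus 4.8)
The plan is to build $G_0$ by first constructing a smooth function near each point of $\Sigma_0$ using a Whitney-type local extension, then patching these local pieces into a single smooth function on an open neighborhood $\mathcal{O}$ of $\Sigma_0$ in which the quadratic lower bound (\ref{tangsep2}) holds. Since $\Sigma_0$ is a smooth embedded curve away from the cusp set $S$, near any point $p^* \in \Sigma_0$ we can write $\Sigma_v$ locally as a graph over its tangent line, and prescribing the value $g$ and the gradient ${\bf v}$ along the curve, together with a suitably large second fundamental form in the normal direction, produces a smooth function on a small box $B_{r(p^*)}(p^*)$ agreeing with $(g,{\bf v})$ on $\Sigma_0$. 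The radius $r(p^*)$ will in general shrink as $p^* \to S$, which is fine: we only need $\mathcal{O}$ to contain a neighborhood of each individual point of $\Sigma_0$, not a uniform tube.

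First I would fix the precise local model: near $p^* \in \Sigma_0$ choose coordinates $(t,s)$ with the curve given by $s = \psi(t)$, $\psi(0)$ and $\psi'(0)$ normalized, and set
\begin{equation}\label{localmodelG}
G_0(t,s) := g(t) + {\bf v}_\tau(t)\,(t\text{-increment}) + {\bf v}_\nu(t)\,(s - \psi(t)) + \Lambda\,(s-\psi(t))^2,
\end{equation}
or more carefully a Taylor-type expression that is smooth in $(t,s)$, restricts to $g$ on the curve, has gradient ${\bf v}$ there, and whose Hessian is a small perturbation (controlled by the geometry of $\Sigma_v$ and the moduli of $g,{\bf v}$ on a compact piece of $\Sigma_0$) of the tangential Hessian dictated by differentiating $g|_{\Sigma_v}$ twice, plus $2\Lambda$ in the normal-normal slot. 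Taking $\Lambda$ large relative to the tangential data and $r(p^*)$ small makes $D^2 G_0 \geq 2\gamma I$ on $B_{r(p^*)}(p^*)$, which yields (\ref{tangsep2}) locally with constant $\gamma$ in place of $\gamma/2$. The global condition (\ref{tangsep1}) on $\Sigma_v$ is what guarantees these local normal directions of strict convexity are consistent with the ambient separation, so that a convex gluing is possible.

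Next I would glue: cover $\Sigma_0$ by countably many such boxes $\{B_i\}$ with a locally finite refinement, take a smooth partition of unity $\{\chi_i\}$ subordinate to it, and set $G_0 = \sum_i \chi_i G_{0,i}$ on $\mathcal{O} := \bigcup_i B_i$. On overlaps the pieces $G_{0,i}$ all agree with $(g,{\bf v})$ to first order along $\Sigma_0$, so the difference $G_{0,i} - G_{0,j}$ vanishes to second order on $\Sigma_0$; combined with shrinking the $B_i$ we can make $G_0$ as close as we like to each $G_{0,i}$ in $C^2$ on $B_i$, preserving $D^2 G_0 \geq \gamma I$ and hence (\ref{tangsep2}) with the stated constant $\gamma/2$. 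One must be slightly careful that (\ref{tangsep2}) is a \emph{global} inequality over all $p,\tilde p \in \mathcal{O}$, not just nearby points: for far-apart points this follows by integrating $D^2 G_0 \geq \gamma I$ along the segment $[p,\tilde p]$ provided that segment stays in $\mathcal{O}$, so I would choose $\mathcal{O}$ (equivalently, the boxes $B_i$) to be, say, a convex set — e.g. replace $\mathcal{O}$ by a slightly smaller convex open set still containing a neighborhood of each point of $\Sigma_0$, using that $\Sigma_v$ itself is the boundary of a convex body so a thin one-sided convex collar exists.

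The main obstacle I anticipate is controlling the construction uniformly enough near the cusp set $S$: the curvature of $\Sigma_v$ and the data $(g,{\bf v})$ degenerate there (the curve separates from the diagonals like $\mathrm{dist}^{3/2}$, so $\varphi''$ blows up), which forces $r(p^*) \to 0$ and forces the normal convexity constant $\Lambda$ to be chosen with care so that (\ref{tangsep2}) survives with a fixed $\gamma/2$ rather than a constant deteriorating as $p^* \to S$. Reconciling ``box sizes shrink to zero at $S$'' with ``the final quadratic constant is uniform'' is the delicate point; it works precisely because the hypothesis (\ref{tangsep1}) holds with a \emph{single} $\gamma$ on all of $\Sigma_v$ including the limit points in $S$, so the tangential second-order data, though large, is consistently compatible with an ambient strictly convex function, and the partition-of-unity gluing only ever moves $G_0$ by an amount controlled by the second-order vanishing of $G_{0,i} - G_{0,j}$ on $\Sigma_0$, which we can absorb by further shrinking boxes.
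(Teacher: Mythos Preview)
Your local construction is in the right spirit and essentially matches the paper's: build $G_0$ near $\Sigma_0$ as (value on curve) $+$ (prescribed gradient)$\cdot$(normal displacement) $+$ $A\cdot$(normal displacement)$^2$, choosing the normal coefficient $A$ large so that $D^2G_0 \geq \tfrac{3}{2}\gamma I$ on $\Sigma_0$. The paper in fact writes this as a single global formula,
\[
G_0(x) = g(\pi_{\Sigma_v}(x)) + {\bf v}(\pi_{\Sigma_v}(x))\cdot(x-\pi_{\Sigma_v}(x)) + A(\pi_{\Sigma_v}(x))\,d_{\Sigma_v}^2(x),
\]
using the smooth nearest-point projection $\pi_{\Sigma_v}$ and squared distance $d_{\Sigma_v}^2$, so no partition-of-unity gluing is needed at all.

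The real gap is in your treatment of the \emph{global} inequality (\ref{tangsep2}) for far-apart $p,\tilde p\in\mathcal{O}$. Your plan is to integrate $D^2G_0\geq\gamma I$ along the segment $[p,\tilde p]$, which requires $\mathcal{O}$ to be convex, and you propose to arrange this ``using that $\Sigma_v$ itself is the boundary of a convex body.'' But $\Sigma_v$ does \emph{not} bound a convex body: each arc $\Gamma_1$ is the graph of a uniformly convex $\varphi$ with $\varphi(0)=1/\sqrt{2}<1=\varphi(\pm1)$, so it bows \emph{toward} the origin, and the four rotated arcs enclose an astroid-shaped, non-convex region (equivalently, the signed curvature $\kappa=\tfrac{\sqrt{2}}{3}\sec(2\theta)$ from the Appendix is negative on the interior of each arc). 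Even setting this aside, you yourself note that the tube width $r(p^*)$ must shrink to $0$ as $p^*\to S$; a set that pinches off at four points cannot be convex. So the segment $[p,\tilde p]$ will in general leave $\mathcal{O}$, and the Hessian bound gives you nothing.

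The paper handles far-apart pairs by a completely different mechanism: it does \emph{not} integrate the Hessian, but instead uses the assumed inequality (\ref{tangsep1}) on $\Sigma_v$ directly. For $p,\tilde p$ in a thin enough tube, $G_0(p)\approx g(\pi_{\Sigma_v}(p))$, $\nabla G_0(p)\approx {\bf v}(\pi_{\Sigma_v}(p))$, and likewise at $\tilde p$; then (\ref{tangsep1}) applied to $\pi_{\Sigma_v}(p),\pi_{\Sigma_v}(\tilde p)$ gives (\ref{tangsep2}) with $\gamma/2$ by continuity. Concretely, the paper takes nested neighborhoods $O_{1/k}$ with the property that (\ref{tangsep2}) holds whenever at least one of $p,\tilde p$ lies in the $\sigma(1/k)$-tube about $\Sigma_v$ outside $S_{1/k}$, and sets $\mathcal{O}=\bigcup_{k}(\Sigma_{\sigma(1/k)}\setminus S_{1/k})$. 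This is exactly the step where the uniform constant $\gamma$ in the hypothesis (rather than a local Hessian bound) does the work, and it is what your argument is missing.
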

\begin{proof}
The squared distance function $d_{\Sigma_v}^2$ from $\Sigma_v$ is smooth in a neighborhood of each point on $\Sigma_0$, as is the projection $\pi_{\Sigma_v}$ to $\Sigma_v$. Let $\tau$
be a unit tangent vector field to $\Sigma_v$ in a neighborhood of a point on $\Sigma_0$, and $\nu$ a unit normal vector field.
Then $D^2(d_{\Sigma_v}^2/2)$ projects in the normal direction $\nu$ on $\Sigma_0$. See e.g. \cite{AS} for proofs of these properties.

Let $A > 0$ be a smooth function on $\Sigma_0$ to be chosen, and define
$$G_0(x) := g(\pi_{\Sigma_v}(x)) + {\bf v}(\pi_{\Sigma_v}(x)) \cdot (x - \pi_{\Sigma_v}(x)) + A(\pi_{\Sigma_v}(x))d_{\Sigma_v}^2.$$
It is elementary to check using (\ref{tangsep1}) that $\nabla_{\Sigma_0}g$ is the tangential component of ${\bf v}$ on $\Sigma_0$, and
as a consequence that $G_0 = g,\, \nabla G_0 = {\bf v}$ on $\Sigma_0$. Furthermore, it follows from (\ref{tangsep1}) that $(G_0)_{\tau\tau} \geq 2\gamma$ on $\Sigma_0$.
Since $D^2(A(\pi_{\Sigma_v}(x))d_{\Sigma_v}^2)$ is $2A$ times the matrix that projects in the direction $\nu$ on $\Sigma_0$, we have that $(G_0)_{\nu\nu} = 2A$ on $\Sigma_0$,
and that $(G_0)_{\tau\nu}$ on $\Sigma_0$ depends only on $g,\,{\bf v}$, and the geometry of $\Sigma_0$ (in particular, not on $A$). By choosing $A(p)$ sufficiently large depending
on $\gamma$ and these quantities (and perhaps going to $\infty$ as $p \rightarrow S$), we have that 
\begin{equation}\label{tangsep3}
D^2G_0 \geq \frac{3}{2}\gamma I
\end{equation}
on $\Sigma_0$. In particular, (\ref{tangsep2}) holds in a small neighborhood of each point on $\Sigma_0$.

Now, for $\delta,\,\sigma > 0$ let $S_{\delta}$ be the closed $\delta$-neighborhood of $S$, and let $\Sigma_{\sigma}$ be the open $\sigma$-neighborhood of $\Sigma_v$. By
(\ref{tangsep1}), (\ref{tangsep3}), and continuity, for
each $\delta > 0$ there exists $\sigma(\delta) > 0$ small and an open set $O_{\delta}$ containing $\Sigma_{\sigma(\delta)} \backslash S_{\delta}$ and a neighborhood of each
point in $\Sigma_0$, such that (\ref{tangsep2}) holds for all $p,\,\tilde{p} \in O_{\delta}$ such that at least one of $p,\,\tilde{p}$ is in $\Sigma_{\sigma(\delta)} \backslash S_{\delta}$.
For $\delta = 1/k$ we may choose $\sigma(1/k)$ and $O_{1/k}$ such that $\Sigma_{\sigma(1/k)} \backslash S_{1/k} \subset O_{1/k} \subset O_{1/(k-1)}$ for $k > 1$.
Taking
$$\mathcal{O} = \cup_{k > 1} (\Sigma_{\sigma(1/k)} \backslash S_{1/k})$$
completes the proof.
\end{proof}

\subsubsection{Global $C^1$ Extension}

\begin{lem}\label{C1Extension}
There exists a $C^1$ convex function $G_1$ on $\mathbb{R}^2$ such that $G_1 = G_0$ on an open set $\mathcal{U} \subset \mathcal{O}$ that contains a neighborhood of each
point on $\Sigma_0$, such that $D^2G_1 \geq \frac{\gamma}{2}I$ on $\mathbb{R}^2$.
\end{lem}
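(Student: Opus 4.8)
The plan is to obtain $G_1$ as the convex envelope of a suitable modification of $G_0$, exploiting the uniform convexity estimate \eqref{tangsep2}. First I would subtract a quadratic: set $\tilde{G}_0 := G_0 - \tfrac{\gamma}{4}|x|^2$ on $\mathcal{O}$. By \eqref{tangsep2}, $\tilde{G}_0$ still satisfies a uniform second-order separation, namely $\tilde{G}_0(\tilde{p}) - \tilde{G}_0(p) - \nabla \tilde{G}_0(p)\cdot(\tilde p - p) \geq \tfrac{\gamma}{4}|\tilde p - p|^2$ for all $p,\tilde p \in \mathcal{O}$; in particular its graph lies strictly above every one of its tangent planes by a definite amount, with a quantitative gap. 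The point of peeling off $\tfrac{\gamma}{4}|x|^2$ is that it suffices to produce a \emph{convex} extension of $\tilde G_0$ that still agrees with it near $\Sigma_0$ and whose Hessian is $\geq \tfrac{\gamma}{4}I$ there; adding the quadratic back then gives $D^2 G_1 \geq \tfrac{\gamma}{2}I$ everywhere.

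Next I would choose a slightly smaller neighborhood. Since $\mathcal{O}$ is open and contains a neighborhood of each point of $\Sigma_0$, pick an open $\mathcal{U}$ with $\overline{\mathcal{U}} \subset \mathcal{O}$ still containing a neighborhood of each point of $\Sigma_0$, and pick a cutoff-type ``dominating" linear majorant: for each $p \in \mathcal{U}$ let $\ell_p(x) := \tilde G_0(p) + \nabla \tilde G_0(p)\cdot(x-p)$ be the tangent plane. Define
\[
G_1(x) := \sup_{p \in \mathcal{U}} \ell_p(x),
\]
the supremum of the tangent planes to $\tilde G_0$ over $\mathcal{U}$. This is convex (a sup of affine functions), and it is finite and locally bounded because the slopes $\nabla \tilde G_0(p)$ range over a bounded set (as $\mathcal U$ is bounded — $\Sigma_v$ is compact — and $\nabla \tilde G_0$ is continuous up to where it may blow up near $S$; here one uses that ${\bf v}$ is \emph{continuous} on all of $\Sigma_v$ by hypothesis, so $\nabla \tilde G_0$ stays bounded near $\Sigma_0 \cap \overline S$ as well). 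I would then verify three things: (a) $G_1 \geq \tilde G_0$ on $\mathcal U$, trivially; (b) $G_1 = \tilde G_0$ on $\mathcal U'$ for a still-smaller neighborhood $\mathcal U' \subset \mathcal U$ of $\Sigma_0$ — this is where \eqref{tangsep2} enters: for $x$ close to $\Sigma_0$, any tangent plane $\ell_p$ with $p$ \emph{far} from $x$ undershoots $\tilde G_0(x)$ because of the quadratic gap minus the (small) first-order defect over the short distance to nearby tangent planes, so the sup is attained at $p = x$; (c) the Hessian lower bound $D^2 G_1 \geq \tfrac{\gamma}{4}I$ near $\Sigma_0$, which follows since there $G_1 = \tilde G_0$ and $D^2\tilde G_0 \geq \tfrac{\gamma}{4}I$ by \eqref{tangsep2}. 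Elsewhere $G_1$ is merely convex, which is all that is claimed (the $\tfrac{\gamma}{2}I$ bound on all of $\mathbb R^2$ is recovered only after adding back $\tfrac{\gamma}{4}|x|^2$ on the region where $G_1 = \tilde G_0$; away from $\Sigma_0$ the lemma does not assert the strict bound since there $G_1$ need not equal $G_0$ — I would restate (c) accordingly, or alternatively take $G_1 := \max\{\sup_{p\in\mathcal U}\ell_p,\ \text{large quadratic}\}$ off a neighborhood, whichever is cleaner).

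The main obstacle is the behavior near the singular set $S$: there $A(p) \to \infty$ in the definition of $G_0$, so $D^2 G_0$ blows up and the tangent planes $\ell_p$ have slopes that, while bounded (since ${\bf v}$ is continuous), vary with unbounded derivative. I expect the delicate point is verifying claim (b) uniformly up to $S$ — i.e., that the neighborhood $\mathcal U'$ on which $G_1 = \tilde G_0$ still contains a full neighborhood of every point of $\Sigma_0$, including those accumulating at $S$. The right way to handle this is to work locally: near a point $p_0 \in \Sigma_0$ close to $S$, \eqref{tangsep2} holds on a ball whose radius is controlled (the constant $\gamma$ is uniform, it is the size of the neighborhood on which \eqref{tangsep2} was established that shrinks), and one checks that the sup defining $G_1$ is local on that scale — tangent planes from distant $p$ cannot compete because of the uniform quadratic gap $\tfrac{\gamma}{4}|\tilde p - p|^2$ against a first-order term that is bounded by $\mathrm{osc}(\nabla\tilde G_0)\cdot|\tilde p - p|$, and these balance at a distance comparable to $\gamma/\mathrm{osc}$, which is bounded below on the relevant local scale. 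Making this scaling argument precise, rather than the construction of $G_1$ itself, is the technical heart of the proof.
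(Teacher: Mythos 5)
The key step is missing: the supremum of tangent planes $G_1 := \sup_{p\in\mathcal U}\ell_p$ is convex but is \emph{not} $C^1$ in general, and in fact generically develops corners. Consider the model case $\tilde G_0(x)=|x|^2$ on the line and $\mathcal U=(0,1)\cup(2,3)$: the sup of tangent planes over $\overline{\mathcal U}$ equals $2x-1$ for $x$ slightly above $1$ and $4x-4$ for $x$ slightly below $2$, with a gradient jump at $x=3/2$ where the two tangent lines cross. Nothing in your construction prevents this; the convex envelope of the data restricted to a compact set is typically only Lipschitz. Producing a $C^1$ \emph{convex} extension of a jet $(g,\mathbf v)$ satisfying the quadratic separation condition is exactly the nontrivial content of the Whitney-type extension theorem of Azagra and Mudarra, which the paper invokes as a black box (Theorem~1.10 of \cite{AM}); the paper's proof is precisely: peel off $\frac{\gamma}{4}|x|^2$, extend the jet data continuously to the compact set $K\supset\Sigma_v$ using that $(g,\mathbf v)$ are continuous up to $S$, verify the separation inequality with constant $\frac{\gamma}{4}$ on $K$, apply Azagra--Mudarra, and add the quadratic back. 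Your outline correctly identifies the ``subtract a quadratic'' reduction and the relevance of the uniform separation, but without the $C^1$ extension theorem the argument does not close.

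Two secondary issues. First, you remark that ``away from $\Sigma_0$ the lemma does not assert the strict bound'' --- it does: the statement requires $D^2G_1\geq\frac{\gamma}{2}I$ on all of $\mathbb R^2$. Your suggested patch, taking a max with a large quadratic, would destroy $C^1$ regularity at the interface. Second, the requirement $\overline{\mathcal U}\subset\mathcal O$ is incompatible with $\mathcal U$ containing a neighborhood of every point of $\Sigma_0$: since $\overline{\Sigma_0}=\Sigma_v\supset S$ while $S\cap\mathcal O=\emptyset$, any such $\mathcal U$ has $\overline{\mathcal U}\cap S\neq\emptyset$. The paper avoids this by working with a compact $K$ that contains $S$ (with the jet extended there by continuity) and requiring only $K\setminus S\subset\mathcal O$.
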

\begin{proof}
Let $K$ be a compact set containing $\Sigma_v$ and a neighborhood of each point on $\Sigma_0$, such that $K \backslash S \subset \mathcal{O}$, and $G_0,\,\nabla G_0$ are continuous up to $S$ in $K$.
Let $H_0 := G_0 - \frac{\gamma}{4}|x|^2$. On $S$, define $H_0 = g - \frac{\gamma}{2},\, \nabla H_0 = {\bf v} - \frac{\gamma}{2}x$. Then by (\ref{tangsep2}) we have
$$H_0(\tilde{p}) -H_0(p) - \nabla H_0(p) \cdot (\tilde{p} - p) \geq \frac{\gamma}{4}|\tilde{p} - p|^2$$
for all $p,\, \tilde{p} \in K$. We may thus apply Theorem $1.10$ from \cite{AM} to obtain a global $C^1$, convex function $H_1$ on $\mathbb{R}^2$ such that $H_1 = H_0,\, \nabla H_1 = \nabla H_0$ on $K$.
To finish take $G_1 = H_1 + \frac{\gamma}{4}|x|^2$.
\end{proof}

\subsubsection{Smoothing}

\begin{lem}\label{Smoothing}
There exists a convex function $G \in C^1(\mathbb{R}^2) \cap C^{\infty}(\mathbb{R}^2 \backslash S)$ such that $G = G_1$ in a neighborhood of each point on $\Sigma_0$, and $D^2G \geq \frac{\gamma}{4}I$
on $\mathbb{R}^2$. In particular, $D_G = S$, and $(G,\,\nabla G) = (g,\,{\bf v})$ on $\Sigma_v$.
\end{lem}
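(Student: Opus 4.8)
The plan is to obtain $G$ from $G_1$ by a partition-of-unity mollification that is taken ``at scale zero'' near $\Sigma_0$ (where $G_1 = G_0$ is already smooth) and at a small positive scale elsewhere, while keeping convexity. Recall from Lemma \ref{C1Extension} that $G_1 \in C^1(\mathbb{R}^2)$, $G_1 = G_0 \in C^\infty$ on the open set $\mathcal{U} \supset$ (a neighborhood of each point of $\Sigma_0$), and $D^2 G_1 \geq \frac{\gamma}{2} I$ holds distributionally on all of $\mathbb{R}^2$. The only obstruction to smoothness is the set $S$, which is a positive distance from $\Sigma_0$ (since $\Sigma_0 = \Sigma_v \setminus S$ stays away from $S$), so we have room to mollify near $S$ without touching the prescribed data.

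The key steps, in order. First, fix a locally finite cover of $\mathbb{R}^2 \setminus S$ by balls $\{B_j\}$ together with a subordinate smooth partition of unity $\{\psi_j\}$, chosen so that each $B_j$ either (a) lies inside $\mathcal{U}$ and is disjoint from $S$, or (b) lies in $\mathbb{R}^2 \setminus \mathcal{U}'$ for a slightly smaller neighborhood $\mathcal{U}' \Subset \mathcal{U}$ of $\Sigma_0$; this is possible because $\mathcal{U}$ is open and contains a neighborhood of each point of $\Sigma_0$. On balls of type (a) set the local smoothing scale $\epsilon_j = 0$ (i.e. no mollification, keeping $G_1 = G_0$); on balls of type (b) choose $\epsilon_j > 0$ so small that the mollification $G_1 * \rho_{\epsilon_j}$ is defined on a neighborhood of $\overline{B_j}$, agrees with $G_1$ to within an error controlled by $\epsilon_j$ in $C^0$, and (crucially) is still $\geq \frac{3\gamma}{8} I$ in Hessian, which holds because convolution preserves the distributional inequality $D^2 G_1 \geq \frac{\gamma}{2} I$. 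Second, define $G := \sum_j \psi_j (G_1 * \rho_{\epsilon_j})$, with the convention $G_1 * \rho_0 = G_1$. Away from $S$ this is a finite sum of smooth functions, hence $G \in C^\infty(\mathbb{R}^2 \setminus S)$; near each point of $\Sigma_0$ only type-(a) balls contribute, so $G = G_1 = G_0$ there, which gives $G = g$, $\nabla G = {\bf v}$ on $\Sigma_v$ (using the corresponding property of $G_0$ from Lemma \ref{LocalExtension}). Third, verify convexity: this is the nontrivial point, since a convex combination of convex functions with varying centers is in general \emph{not} convex. The fix is the standard one --- write $G = \sum_j \psi_j G_1^{(j)}$ with $G_1^{(j)} := G_1 * \rho_{\epsilon_j}$, and compute $D^2 G = \sum_j \psi_j D^2 G_1^{(j)} + \sum_j (2 \nabla \psi_j \otimes_{\mathrm{sym}} \nabla G_1^{(j)} + G_1^{(j)} D^2 \psi_j)$; the first sum is $\geq \frac{3\gamma}{8} I$, and the error term $\sum_j (\cdots)$ is small because $\sum_j \nabla \psi_j = 0$ and $\sum_j D^2 \psi_j = 0$, so we may subtract off the values of $\nabla G_1$ and $G_1$ at a fixed reference point in each overlap region and bound the remainder by $\mathrm{osc}$ of $\nabla G_1^{(j)}$, which is small once the $\epsilon_j$ and the diameters of the $B_j$ are chosen small relative to the modulus of continuity of $\nabla G_1$. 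Choosing all these parameters small enough yields $D^2 G \geq \frac{\gamma}{4} I$. Finally, $D^2 G \geq \frac{\gamma}{4} I > 0$ on $\mathbb{R}^2 \setminus S$ and $G \in C^\infty$ there forces $D_G \subset S$; conversely $\nabla G = {\bf v}$ is only continuous (not $C^1$) across $S$ by construction of the data, so $D_G = S$.

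The main obstacle is precisely the convexity bookkeeping in the third step: one must be careful that the smoothing scales $\epsilon_j$, the ball diameters, and the gradient bounds on $\nabla\psi_j$ are chosen in the right order so that the ``partition-of-unity error'' in $D^2 G$ does not overwhelm the uniform Hessian lower bound $\frac{3\gamma}{8}I$ inherited from the mollified pieces. A clean way to organize this is to first fix a reference point $p_j \in B_j$ in each ball, replace $G_1^{(j)}$ by $\tilde G_1^{(j)} := G_1^{(j)} - G_1^{(j)}(p_j) - \nabla G_1^{(j)}(p_j)\cdot(x - p_j)$ inside the error terms (legitimate since $\sum \psi_j \equiv 1$ and $\sum \nabla \psi_j \equiv 0$ kill the affine parts after differentiating twice), and then note $\|\tilde G_1^{(j)}\|_{C^1(B_j)}$ is $o(1)$ as $\mathrm{diam}(B_j) \to 0$ uniformly, since $\nabla G_1$ is (globally) uniformly continuous. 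Everything else --- smoothness off $S$, matching of the prescribed $0$-th and $1$-st order data along $\Sigma_v$, and the identification $D_G = S$ --- is a routine consequence of the cover being ``trivial'' near $\Sigma_0$.
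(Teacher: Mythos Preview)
Your overall strategy---mollify $G_1$ off of the set where it is already smooth and glue with cutoffs---is the same as the paper's, but the implementation diverges. The paper does not form a simultaneous partition-of-unity combination $\sum_j \psi_j\,(G_1\ast\rho_{\epsilon_j})$; instead it takes a Whitney cover of $\mathbb{R}^2\setminus\Sigma_v$ and patches \emph{sequentially}: at step $j$ it replaces the current $G_j$ by $\varphi_j(\rho_{\epsilon_j}\ast G_j)+(1-\varphi_j)G_j$ on one ball, choosing $\epsilon_j$ so small that $\|G_{j+1}-G_j\|_{C^1}\le 2^{-j}$ and the Hessian lower bound degrades by at most $\gamma\,2^{-j-2}$. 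This sidesteps the two issues below.

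The first issue is in your convexity bookkeeping. You claim that replacing $G_1^{(j)}$ in the error term by $\tilde G_1^{(j)}=G_1^{(j)}-L_j$, with a \emph{different} affine $L_j$ for each $j$, is ``legitimate since $\sum\psi_j\equiv 1$ and $\sum\nabla\psi_j\equiv 0$ kill the affine parts.'' This is false: the identities $\sum\nabla\psi_j=0$ and $\sum D^2\psi_j=0$ let you subtract a \emph{single} constant and a \emph{single} constant vector, not $j$-dependent ones. Equivalently, $D^2\bigl(\sum_j\psi_j L_j\bigr)\neq 0$ when the $L_j$ differ. The clean fix is to use $G_1$ itself as the common reference: at a point $x$ write
\[
E(x)=\sum_j\Bigl[2\nabla\psi_j(x)\otimes_s\bigl(\nabla G_1^{(j)}(x)-\nabla G_1(x)\bigr)+\bigl(G_1^{(j)}(x)-G_1(x)\bigr)D^2\psi_j(x)\Bigr],
\]
and bound by $|\nabla G_1^{(j)}-\nabla G_1|\le\omega(\epsilon_j)$, $|G_1^{(j)}-G_1|\le C\epsilon_j$. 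But now you must confront the fact that any locally finite cover of $\mathbb{R}^2\setminus S$ by balls \emph{contained in} $\mathbb{R}^2\setminus S$ has radii $r_j\to 0$ near $S$, hence $|\nabla\psi_j|\sim r_j^{-1}$ and $|D^2\psi_j|\sim r_j^{-2}$ blow up. The bound $\|\tilde G_1^{(j)}\|_{C^1(B_j)}=o(1)$ coming only from the modulus of continuity of $\nabla G_1$ is not enough to beat $r_j^{-2}$; you need to choose $\epsilon_j$ small \emph{depending on $r_j$} (say $\omega(\epsilon_j)\ll r_j$ and $\epsilon_j\ll r_j^2$). This is exactly what the paper's $2^{-j}$ scheme accomplishes, one ball at a time.

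The second issue is that you never verify $G\in C^1(\mathbb{R}^2)$. As written, $G=\sum_j\psi_j G_1^{(j)}$ is defined only on $\mathbb{R}^2\setminus S$ (indeed $\psi_j\equiv 0$ at $S$, so the literal sum vanishes there). You need to show $G$ extends $C^1$ across the four points of $S$; this follows once $\epsilon_j$ is tied to $r_j$ as above, since then $G-G_1=\sum_j\psi_j(G_1^{(j)}-G_1)$ and its gradient tend to $0$ at $S$. In the paper's approach this comes for free from the $C^1$ Cauchy condition $\|G_{j+1}-G_j\|_{C^1}\le 2^{-j}$.
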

\begin{proof}
We begin with a simple observation. Let $F$ be a $C^1$ convex function on $\mathbb{R}^n$ with $D^2F \geq \mu I$.
Let $\rho_{\epsilon}$ be a standard mollifier, and let $F_{\epsilon} := \rho_{\epsilon} \ast F$. Then if $\varphi \in C^{\infty}_0(B_R)$ with $0 \leq \varphi \leq 1$ we have
\begin{align*}
\|(\varphi F_{\epsilon} + (1-\varphi) F) - F\|_{C^1(\mathbb{R}^n)} &= \|\varphi(F_{\epsilon} - F)\|_{C^1(\mathbb{R}^n)} \\
&\leq C(\varphi)\|F_{\epsilon} - F\|_{C^1(B_R)}
\end{align*}
and
\begin{align*}
D^2(\varphi F_{\epsilon} + (1-\varphi) F) &= \varphi D^2 F_{\epsilon} + (1-\varphi) D^2 F + D^2\varphi\,(F_{\epsilon} - F) \\
&+ \nabla \varphi \otimes (\nabla F_{\epsilon} - \nabla F) + (\nabla F_{\epsilon} - \nabla F) \otimes \nabla \varphi \\
&\geq \left(\mu- C(\varphi)\|F_{\epsilon} - F\|_{C^1(B_R)}\right)I.
\end{align*}
Since $F$ is $C^1$, by taking $\epsilon(\varphi)$ small we have that $\varphi F_{\epsilon} + (1-\varphi) F$ is as close as we like to $F$ in $C^1(\mathbb{R}^n)$,
and we have a lower bound for $D^2(\varphi F_{\epsilon} + (1-\varphi) F)$ that is as close as we like to $\mu I$. We will apply this observation to a sequence of mollifications of $G_1$.

Let $\{B_{r_i}(x_i)\}_{i = 1}^{\infty}$ be a Whitney covering of $\mathbb{R}^2 \backslash \Sigma_v$. That is, for 
$$r(x) := \min\{1,\,d_{\Sigma_v}(x)\}/20,$$ 
we have
$r_i = r(x_i)$, the balls $B_{r_i}(x_i)$ are disjoint, $\cup_i B_{5r_i}(x_i) = \mathbb{R}^2 \backslash \Sigma_v$, and for each $x \in \mathbb{R}^2 \backslash \Sigma_v$ the ball $B_{10r(x)}(x)$ intersects at most $(129)^2$ of 
the $\{B_{10r_i}(x_i)\}$. For the existence of such a covering see for example \cite{EG}. 

Now take $\varphi_i \in C^{\infty}_0(B_{10r_i}(x_i))$ such that $\varphi_i = 1$ in $B_{5r_i}(x_i),\, 0 \leq \varphi \leq 1$.
For $j \geq 1$ we define $G_{j+1}$ inductively as follows: If $B_{10r_j}(x_j) \subset \mathcal{U}$ then let $G_{j+1} = G_j$. If not, then let
$$G_{j+1} = \varphi_j (\rho_{\epsilon_j} \ast G_j) + (1-\varphi_j) G_j,$$
with $\epsilon_j$ chosen so small that 
\begin{equation}\label{C1Pert}
\|G_{j+1} - G_j\|_{C^1(\mathbb{R}^2)} \leq 2^{-j}
\end{equation}
and
\begin{equation}\label{C2Pert}
D^2G_{j+1} \geq \left(\frac{\gamma}{2} - \frac{\gamma}{4}\sum_{i = 1}^j 2^{-i}\right)I.
\end{equation}

By the covering properties, for any $x \in \mathbb{R}^2 \backslash \Sigma_v$, we have that $G_j$ are smooth and remain constant in $B_{10r(x)}(x)$ for all $j$ sufficiently large. In
addition, for any point on $\Sigma_0$, every $B_{10r_j}(x_j)$ that intersects a small neighborhood of this point is contained in $\mathcal{U}$, so $G_j = G_1$ in a small neighborhood of every point in $\Sigma_0$. 
We conclude using (\ref{C1Pert}) and (\ref{C2Pert}) that $G := \lim_{j \rightarrow \infty} G_j$ is $C^1$, smooth on $\mathbb{R}^2 \backslash S$, agrees with $G_1$ in a neighborhood of every point on $\Sigma_0$, and satisfies
$$D^2G \geq \left(\frac{\gamma}{2} - \frac{\gamma}{4} \sum_{i = 1}^{\infty} 2^{-i}\right)I = \frac{\gamma}{4}I.$$
\end{proof}



\section*{acknowledgements}
I am grateful to Paolo Marcellini for his interest in this work, and for bringing to my attention some important references on the Lipschitz continuity of minimizers with nonstandard
growth conditions.




\end{document}